\documentclass[12pt]{article}

\usepackage{amsmath, amsfonts, amssymb}

\usepackage[english]{babel}
\usepackage[utf8]{inputenc}
\usepackage{todonotes}
\usepackage{comment}
\usepackage[left=2.5cm, right=3cm, top=2cm, bottom=3cm, bindingoffset=0cm, marginparwidth=3cm]{geometry}

\usepackage{hyperref}
\usepackage{enumitem}
\usepackage{amsthm}

\usepackage{tikz}
\usetikzlibrary{arrows,calc}
\tikzset{
>=stealth',
}

\newcounter{theorems}
\newtheorem{prop}[theorems]{Proposition}
\newtheorem{thm}[theorems]{Theorem}

\newtheorem{cor}[theorems]{Corollary}

\newtheorem{lem}[theorems]{Lemma}

\newtheorem*{theorem*}{Theorem}

\theoremstyle{definition}

\theoremstyle{remark}

\newtheorem{rem}[theorems]{Remark}
\newtheorem*{rem*}{Remark}

\makeatletter
\def\blfootnote{\gdef\@thefnmark{}\@footnotetext}
\makeatother

\def\e {\varepsilon }

\def\phi {\varphi}

\def\be {\begin{equation}}

\def\ee {\end{equation}}

\def\bt {\begin{thm}}

\def\et {\end{thm}}

\def\R {\mathcal R}

\def\nn {\mathbb N}

\DeclareMathOperator{\dist}{dist}

\def\U{\mathcal U}

\def\t{{\theta}}

\begin{document}

\title{Generic families of circle diffeomorphisms have many coexisting periodic orbits}
\author{Ivan Shilin\footnote{HSE University, Moscow}}
\date{}

\maketitle
\begin{abstract}
We prove that for a generic family of circle diffeomorphisms every parameter value that corresponds to an irrational rotation number is approximated by parameter values for which the diffeomorphisms have arbitrarily large finite numbers of periodic orbits. This phenomenon implies that families where irrational rotation numbers appear are not weakly structurally stable. Moreover, we prove that any locally residual set of one-parameter families with nonconstant rotation number yields a continuum of weak equivalence classes of families.
\end{abstract}

\blfootnote{\textit{Keywords.} Generic one-parameter families, circle diffeomorphisms, saddle-node bifurcations, structural stability.}

\blfootnote{\textit{Mathematics Subject Classification.} 34G15, 37E10, 37C15, 37C20}

\blfootnote{This study was supported by the Basic Research Program of the HSE University (project No. 075-00648-25-00 ``Symmetry. Information. Chaos'').}

We consider finite-parametric families of orientation preserving circle diffeomorphisms, and our primary focus is on one-parameter families. In the general case of $l$-parameter families, we use notation $F = (f_\beta)_{\beta\in B}$, where $B$ is the parameter space that is usually assumed to be a closed ball or a rectangle in~$\mathbb{R}^l$. In the one-parameter case we write $F = (f_\theta)_{\theta\in I},\; I = [-1, 1].$ The exact regularity in the parameter is not important for most of the arguments, but one may assume that we are dealing with $C^k$-families of $C^r$-smooth circle diffeomorphisms, $1 \le k \le r \le +\infty$. The rotation number of a circle diffeomorphism $f_\beta$ is denoted~$\rho(f_\beta)$; it depends continuously on the diffeomorphism and hence on the parameter, see~\cite{KH} for the modern exposition of the relevant classical results. The properties of the rotation number and related properties of circle diffeomorphisms were studied extensively, e.g., in~\cite{A, He, Y, KhT, B2}. Bifurcations that happen in generic one-parameter families of diffeomorphisms were studied in~\cite{S, NP, B1, B2, NPT}, to name a few. We focus on a somewhat neglected question of how many periodic orbits can coexist, in a generic family of circle diffeomorphisms, for a parameter value that corresponds to a rational rotation number.
%The main result, to put it simply, is that a generic family of orientation preserving circle diffeomorphisms for which the rotation number is not constant has diffeomorphisms with an arbitrary number of hyperbolic sources.

\begin{thm}\label{thm:main}
For a generic $l$-parameter $C^k$-family of $C^r$-smooth orientation preserving circle diffeomorphisms, $r\ge 1$, and for every integer $N>0$, every parameter value that corresponds to an irrational rotation number is accumulated by parameter values that correspond to structurally stable diffeomorphisms with exactly~$2N$ periodic orbits.
\end{thm}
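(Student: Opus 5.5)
The plan is a Baire-category argument: write the desired generic set as a countable intersection of open dense sets of families, one for each $N$ and each scale of approximation. Fix $N$ and a countable collection of small closed balls $D_j\subset B$ (rational centers and radii). Let $\mathcal{A}_{N,j}$ be the set of families $F$ with one of two properties. Either $\rho(f_\beta)$ is rational and constant on $D_j$, or more generally $\rho(D_j)$ contains no irrational number. Or there exists $\beta\in \operatorname{int} D_j$ such that $f_\beta$ is a Morse--Smale diffeomorphism with exactly $2N$ periodic orbits, all hyperbolic.

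The second condition is open, since structural stability of $f_\beta$ persists under perturbation of the family with $\beta$ fixed. The first condition is not open as stated, so I would replace it by the open condition ``$\rho(f_\beta)$ is rational of a fixed denominator on a neighborhood of $D_j$ and all $f_\beta$ have some hyperbolic periodic point''. Alternatively I would work with the closure and check density of the interior; this bookkeeping should be routine.

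The core step is density. Take a family $F$ and a ball $D_j$ containing a parameter $\beta_0$ with $\rho(f_{\beta_0})$ irrational. Since $\rho$ is continuous and not locally constant near $\beta_0$ (an irrational value cannot be taken on an open set robustly, and in any case $\rho$ takes rational values $p/q$ with arbitrarily large $q$ arbitrarily close to $\beta_0$ unless it is constant), we can find $\beta_1\in \operatorname{int} D_j$ with $\rho(f_{\beta_1})=p/q$ and $q>N$. In fact we can choose $\beta_1$ on the boundary of the phase-locking region, where $f_{\beta_1}^q$ has only non-hyperbolic fixed points after a small perturbation.

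The main construction is a local perturbation of the family near $\beta_1$. By a $C^r$-small perturbation supported in a small parameter neighborhood of $\beta_1$, I would make $f^q_{\beta}$ have exactly $2N$ orbits of hyperbolic periodic points for some nearby $\beta$. The reason this is possible is that $f_{\beta_1}$ is close to a diffeomorphism conjugate to a rotation by $p/q$ when $q$ is large: near $\beta_0$, Denjoy/Herman-type closeness gives small $q$-th iterate displacement $f^q_{\beta_1}-\mathrm{id}$ only in the $C^0$ sense. Getting a $C^r$-small perturbation whose $q$-th iterate has a prescribed graph with $2N$ transverse zeros in a fundamental domain is the main obstacle. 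I would attempt it as follows. Take a fundamental interval $J$ for $f_{\beta_1}$ near a point of the (semi-stable) periodic orbit where $f^q_{\beta_1}-\mathrm{id}$ has a degenerate zero. Compose $f_\beta$ with a small bump map supported in $J$ (disjoint from its first $q-1$ iterates), parametrized so that $f^q-\mathrm{id}$ on $J$ becomes a small oscillating function with exactly $2N$ simple zeros modulo the orbit identification. The oscillation amplitude can be made $C^r$-small because we are free to take it tiny compared to the bump's derivatives only if the zero was degenerate, which is exactly why we choose $\beta_1$ at a saddle-node boundary where $f^q-\mathrm{id}$ has a zero of high flatness after a preliminary perturbation. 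Outside $J$, the perturbation must create no other zeros, which is arranged by first perturbing so that $f^q_{\beta_1}$ has a unique degenerate periodic orbit and is strictly on one side elsewhere.

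Finally, intersecting the open dense sets $\mathcal{A}_{N,j}$ over all $N,j$ gives a residual set. For $F$ in it, any $\beta_0$ with irrational rotation number lies in balls $D_j$ of arbitrarily small radius with irrational values of $\rho$, hence is accumulated by structurally stable $f_\beta$ with exactly $2N$ periodic orbits.
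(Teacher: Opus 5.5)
\textbf{Verdict.} Your Baire-category skeleton matches the paper's. The paper uses the open dense sets $\operatorname{Int}C_n\cup\mathcal V_{n,N}$, where $C_n$ is the set of families with only rational rotation numbers on $U_n$; this settles your bookkeeping issue. The gap is in the density step, which cannot work as you describe it for $r\ge 2$ and $N\ge 2$.

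\textbf{Why the local bump construction fails.} At a boundary point $\beta_1$ of a phase-locking interval, the parabolic orbit is generically quadratic. So $g=f^q_{\beta_1}-\mathrm{id}-p$ has constant sign, vanishes only on the orbit of $x_0$, and $g''(x_0)=2c\neq 0$. After a small perturbation, the periodic orbits near that orbit correspond one-to-one to zeros of $\tilde g=\tilde f^q-\mathrm{id}-p$ in a small neighbourhood of $x_0$, where $g''$ stays close to $2c$. By Rolle's theorem, three zeros of $\tilde g$ there force a zero of $\tilde g''$. Hence any perturbation producing $2N\ge 4$ orbits changes $f^q$ by roughly $2|c|$ in $C^2$. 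Choosing the scale of the bump does not help: cancelling $c(x-x_0)^2$ on an interval of any length costs about $|c|$ in $C^2$.

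Your ``preliminary perturbation'' that makes the zero highly flat is exactly this non-small perturbation. It would be small only if $|c|$ were small, and you note yourself that Denjoy-type arguments control $f^q_{\beta_1}-\mathrm{id}$ only in $C^0$. To rescue the approach you would need $C^r$-estimates, uniform as $q\to\infty$, on $f^q_{\beta_1}-\mathrm{id}$. You would also need uniform bounds on $f^{q-1}$, to turn a perturbation of $f^q$ into a perturbation of $f$ on a single fundamental interval. Nothing in the proposal supplies these. For $r=1$ your construction does go through, since a bump of height $\delta$ on $\{|g|<\delta\}$ has $C^1$-norm of order $\sqrt{|c|\delta}$.

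\textbf{The missing ingredient: smooth linearization.} This is how the paper proves Lemma~\ref{lem:pert}:
\begin{enumerate}
  \item Perturb to a $C^\infty$ family with nonconstant rotation number.
  \item Choose $\beta_1$ near $\beta_0$ with Diophantine $\alpha=\rho(f_{\beta_1})$.
  \item Apply the Herman--Yoccoz theorem to write $f_{\beta_1}=h^{-1}R_\alpha h$ with $h$ a $C^\infty$ diffeomorphism.
  \item For $p/q$ close to $\alpha$, the map $h^{-1}R_{p/q}h$ is $C^r$-close to $f_{\beta_1}$, and its $q$-th iterate is exactly the identity. So the flatness you need comes for free.
  \item In the $h$-coordinate, compose $R_{p/q}$ with the time-$\delta$ map of an $R_{p/q}$-invariant flow having $N$ hyperbolic sinks and $N$ hyperbolic sources per fundamental domain. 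This gives a structurally stable map with exactly $2N$ periodic orbits.
  \item Realize this change of the single map $f_{\beta_1}$ by a small perturbation of the family localized near $\beta_1$.
\end{enumerate}
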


The case $r \ge 2$ is more convenient to deal with, because this regularity is needed to define quadratic saddle-node bifurcations of periodic orbits. For $r \ge 2$, the results of P.~Brunovsk{\'y}~\cite{B1, B2} imply that generic one-parameter families of circle diffeomorphisms contain only Kupka-Smale diffeomorphisms and diffeomorphisms with quadratic saddle-node bifurcations; the latter appear for isolated values of the parameter and have only one parabolic orbit.\footnote{This also follows from the results of J.~Sotomayor~\cite{S}, as explained in~\cite{NP}.} In particular, for a generic one-parameter family each rational rotation number appears on a union of disjoint segments in the parameter space, and at the endpoints of each segment a single parabolic orbit is present. Since each saddle-node bifurcation produces exactly one pair of hyperbolic periodic orbits, Theorem~\ref{thm:main} can be strengthened for $r \ge 2$: for a generic family, each parameter value that corresponds to an irrational rotation number is also accumulated by parameter values that correspond to any given positive \emph{odd} number of periodic orbits, all hyperbolic except one. For one-parameter families, it follows from Theorem~\ref{thm:main} and the results mentioned above, and the argument for $l$-parameter families will be given in Proposition~\ref{prop:all_numbers} below.

Two families $F = (f_\beta)_{\beta \in B}$, $G = (g_{\tilde{\beta}})_{\tilde{\beta} \in \tilde{B}}$ of diffeomorphisms of a phase space $M$ are called \emph{weakly topologically equivalent} if there exists a map
\begin{equation}\label{eq:weak_eqiv}
H : B \times M \to \tilde{B} \times M, \quad H(\beta, x) = \bigl(h(\beta), H_\beta(x)\bigr)
\end{equation}
such that $h : B \to \tilde{B}$ is a homeomorphism and for each $\beta \in B$ the map 
$H_\beta \colon M \to M$ is a homeomorphism that conjugates $f_\beta$ and $g_{h(\beta)}$. We talk about strong equivalence if $H$ itself is a homeomorphism.

It is not obvious how many classes of weak equivalence a residual or locally residual set of one-parameter families can provide. A priori one must keep in mind that the answer may depend on a chosen residual set. Of course, the cardinality of the set of equivalence classes cannot exceed the cardinality of the whole space of families, i.e., the cardinality of continuum. 
For brevity, let us call a family \emph{nontrivial} if the rotation number is not constant as a function of the parameter, i.e., it nontrivially depends on~$\beta$.
We will prove that for $r \ge 2$ any locally residual set of nontrivial one-parameter families provides a set of equivalence classes of exactly cardinality continuum, and we conjecture that the same holds for $l$-parameter families as well.

\begin{thm}\label{thm:continuum}
For any locally residual subset of nontrivial $C^k$-smooth one-parameter families of $C^r$-smooth orientation preserving circle diffeomorphisms, $2 \le r \le +\infty$, the set of weak topological equivalence classes has cardinality continuum.
\end{thm}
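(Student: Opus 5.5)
The plan is to build a weak-equivalence invariant that records, for each rational rotation number, how many periodic orbits the family can have at once. Then a Cantor-scheme/Baire argument inside the given locally residual set $\mathcal R$ produces continuum many pairwise distinct values of this invariant. The upper bound is trivial, since the space of families has cardinality continuum.

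\emph{The invariant.} Let $F=(f_\theta)_{\theta\in I}$ be a family and let $p/q\in\mathbb Q/\mathbb Z$. Put
\[
M_F(p/q)=\sup\{\#\,\text{periodic orbits of } f_\theta : \rho(f_\theta)=p/q\}.
\]
Suppose $H$ is a weak equivalence between $F$ and $G$. Each $H_\theta$ conjugates $f_\theta$ with $g_{h(\theta)}$, so it preserves the number of periodic orbits. It sends rotation number $\rho$ to $\rho$ or to $-\rho$, according to whether $H_\theta$ preserves or reverses orientation. Since $\theta\mapsto H_\theta$ need not be continuous, the sign is a priori chosen separately for each $\theta$. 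To get a clean invariant, I would take the symmetrized function
\[
\widehat M_F(p/q)=\max\bigl(M_F(p/q),M_F(-p/q)\bigr).
\]
This is preserved by weak equivalence: every $\theta$ with $\rho(f_\theta)=\pm p/q$ corresponds to a $\tilde\theta=h(\theta)$ with $\rho(g_{\tilde\theta})=\pm p/q$ and the same orbit count, and conversely via $h^{-1}$. The parameter homeomorphism $h$, order-reversing or not, plays no role, because $M_F$ does not depend on the order of parameters.

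\emph{Local stability and flexibility.} Fix an open set $\mathcal U$ in which $\mathcal R$ is residual, say $\mathcal R\supset\bigcap_j\mathcal G_j$ with $\mathcal G_j$ open and dense in $\mathcal U$. We may also intersect with the Brunovsk\'y generic set described after Theorem~\ref{thm:main}. Take a generic nontrivial $F_0\in\mathcal U$. Its rotation number then covers a nondegenerate interval $J$. Choose distinct rationals $p_n/q_n\in J$ with all the values $\pm p_n/q_n$ pairwise distinct.

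Two facts are needed, and I would prove them as lemmas.
\begin{enumerate}
\item[(a)] Stability. For a Brunovsk\'y-generic $F$, each plateau $\{\rho=p/q\}$ is a finite union of segments carrying finitely many quadratic saddle-node parameters. Under $C^k$-small perturbations these persist, together with the orbit counts between them. Hence $\widehat M_{F}(\pm p_n/q_n)$ for $n\le m$ is locally constant on an open neighbourhood of such $F$.
\item[(b)] Flexibility. Near any such $F$ one can perturb, locally in the parameter inside the $p_n/q_n$-plateau, to create an extra pair of saddle-node bifurcations. This raises $\widehat M(p_n/q_n)$ by $2$ without affecting the other $p_i/q_i$, $i<n$. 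This is the same local perturbation that underlies Theorem~\ref{thm:main}.
\end{enumerate}

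\emph{Cantor scheme.} By induction on the length of a finite binary word $w$, I build open sets $\mathcal U_w\subset\mathcal U\cap\mathcal G_{|w|}$ with the following properties:
\begin{itemize}
\item $\overline{\mathcal U_{wi}}\subset\mathcal U_w$ and $\operatorname{diam}\mathcal U_w\to0$;
\item the values $\widehat M(p_n/q_n)$, $n\le|w|$, are constant on $\mathcal U_w$;
\item these values are distinct for $\mathcal U_{w0}$ and $\mathcal U_{w1}$ at $n=|w|+1$.
\end{itemize}
To pass from $\mathcal U_w$ to its two children, take a Brunovsk\'y-generic family in $\mathcal U_w\cap\mathcal G_{|w|+1}$. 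Use (b) to get a second generic family nearby with a different value at $p_{|w|+1}/q_{|w|+1}$. Then use (a) to put small balls around both families. The space of families is complete, so each infinite branch $s$ determines a point $F_s\in\bigcap_j\mathcal G_j\subset\mathcal R$. Distinct branches give different functions $\widehat M$, hence non-equivalent families, and so $\mathcal R$ meets continuum many classes.

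The main obstacle I expect is step (b) together with the locality in (a). The perturbation must stay inside the small open set $\mathcal U_w$. It must be supported so that it changes the orbit count only on one rational plateau. It must also keep the family nontrivial, and $J$ must keep containing the chosen rationals. I would try first a bump perturbation in both $\theta$ and $x$: supported in a small parameter interval inside the interior of the $p_n/q_n$-plateau, and near a hyperbolic periodic orbit of $f_\theta^{q_n}$. It would be chosen to fold the graph of $f^{q_n}_\theta$ locally and so produce two new fixed points through quadratic saddle-nodes.
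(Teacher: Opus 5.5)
\textbf{There is a genuine gap: step (b) is false, and the Cantor scheme cannot branch at a prescribed rational.}

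The fold you describe cannot be produced by a small perturbation. Let $x_0$ be a hyperbolic fixed point of $g=f^{q_n}_\theta$ with $|g'(x_0)-1|=\lambda>0$. Then $g'-1$ keeps its sign on a neighbourhood $V$ of $x_0$. So does $\tilde g'-1$ for every $\tilde g$ that is $C^1$-close to $g$, with the allowed distance depending on $\lambda$. Hence such a $\tilde g$ has at most one fixed point in $V$. Away from $\operatorname{Fix} g$, the quantity $|g(x)-x|$ is bounded below. Thus a Morse--Smale circle diffeomorphism acquires no new periodic orbits under small perturbations. Near the finitely many nondegenerate saddle-node parameters, the Rolle argument in the proof of Lemma~\ref{lem:parity_stable} excludes extra orbits as well.

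This is just your own (a): if $\widehat M(p_n/q_n)$ is constant on a neighbourhood of a generic $F$, no sufficiently small perturbation can raise it. So changing the value at a \emph{prescribed} rational needs a perturbation of size at least some $\varepsilon_n(F)>0$, which you do not control. Meanwhile your $\mathcal U_w$ must shrink: they have to fit inside the constancy neighbourhoods of the indices $\le |w|$, and their diameters tend to $0$. Nothing prevents $\mathcal U_w$ from lying inside the constancy neighbourhood of $p_{|w|+1}/q_{|w|+1}$. Already the given open set $\mathcal U$ may be a tiny ball around a generic family. In that case the two children cannot be separated at that rational, and the scheme halts.

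Nor is the perturbation behind Theorem~\ref{thm:main} a local fold. Lemma~\ref{lem:pert} linearizes the map at a Diophantine parameter by Herman--Yoccoz, replaces it by a rational rotation $R_{p/q}$, and only then adds $2N$ orbits. The rational $p/q$ must be close to the Diophantine rotation number and depends on the allowed perturbation size, so it cannot be prescribed.

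\textbf{The missing idea: choose the branching rational node by node, then freeze it.} This is the point of the paper's two-player game. At a node $w$ with current radius $\varepsilon$, the paper does the following:
\begin{itemize}
\item it takes the smallest index $j_w$, beyond the already frozen ones, whose parity $b_{j}$ can be changed by an $\varepsilon$-perturbation inside the residual set (Corollary~\ref{cor:set_parity});
\item it sets $\Xi(w0)=\Xi(w)$ and lets $\Xi(w1)$ be such a perturbation;
\item it shrinks the radius so that index $j_w$ stays fixed for both descendants.
\end{itemize}
Two branches then differ at the index of their first branching node, even though that index is not known in advance.

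Because it works with parities, the paper needs Lemma~\ref{lem:set_a_n} to make sure a parity can actually be flipped. That lemma homotopes to the family given by Lemma~\ref{lem:pert} and makes the homotopy generic via Lemma~\ref{lem:trans}, so that $a_{n_0}$ changes in steps of at most $1$ along it.

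With your unreduced invariant $\widehat M$, the adaptive scheme would need less. It is enough to raise $\widehat M$ at some rational. Lemma~\ref{lem:pert} provides this if you fix $p/q$ in its first step and then take $2N>\widehat M_F(p/q)$. You then pass to a nearby family in $\mathcal R$, which keeps the $2N$ orbits by structural stability of the perturbed map.

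Your symmetrization over $\pm p/q$ is a sensible precaution against orientation-reversing conjugacies. But as written, (b) fails and the fixed-enumeration Cantor scheme does not go through.
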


%In the case of $r=1$ the same applies for any open set of families that contains a nontrivial family, see Remark~\ref{rem:r1}.

All results of the present paper can be reformulated for the case of finite-parameter families of nonsingular flows on the 2-torus with a global cross section.

\section{Arbitrarily large numbers of coexisting sourses}

\subsection{The proof of Theorem~\ref{thm:main}}
We assume that the space of families that we consider is endowed with a metric.
The proof of Theorem~\ref{thm:main} is based on the following perturbation lemma.
\begin{lem}\label{lem:pert}
    Let $(g_\beta)$ be a family such that at $\beta_0$ the rotation number $\rho(g_{\beta_0})$ is irrational. Then for any $N \in \nn$ and $\e > 0$ the family $(g_\beta)$ can be approximated by a family $(f_\beta)$ that has a structurally stable diffeomorphism with exactly $2N$ hyperbolic periodic orbits at some~$\beta_1$ that is $\e$-close to~$\beta_0$.
\end{lem}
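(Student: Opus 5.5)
We use the irrational rotation number at $\beta_0$ to find a parameter close to $\beta_0$ where $g$ is $C^0$-close to a rational rotation, and then perturb locally in the parameter so that at $\beta_1$ the map becomes a Morse--Smale diffeomorphism with exactly $N$ attracting and $N$ repelling periodic orbits.

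\emph{Step 1: a nearby periodic rotation number with large period.} Since $\rho(g_{\beta_0})$ is irrational, $g_{\beta_0}$ is semiconjugate to the irrational rotation. By Denjoy theory for $r\ge 2$ it is even conjugate, and in general the semiconjugacy suffices. We first make a small perturbation of the family, supported in a small neighbourhood of $\beta_0$ in parameter space, so that the rotation number is nonconstant near $\beta_0$. For example, replace $g_\beta$ by $R_{\delta\phi(\beta)}\circ g_\beta$ with a bump $\phi$ and tiny $\delta$; if the rotation number is locally constant at an irrational value, composing with rotations makes it strictly monotone along a one-parameter slice. Then there are $\beta$ arbitrarily close to $\beta_0$ with $\rho(g_\beta)=p/q$ rational, where $q$ can be taken as large as we like. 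Take $\beta_1$ to be such a point, chosen in the interior of the region with this rotation number.

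\emph{Step 2: local surgery at $\beta_1$.} We modify $g_{\beta_1}$ by a $C^r$-small perturbation so that its periodic orbits become exactly $2N$ hyperbolic ones, all of period $q$. Choose $q\gg N$. Conjugate $g_{\beta_1}$ by a diffeomorphism so that it is close to the rotation $R_{p/q}$ on a fundamental domain; this is the standard Herman-type picture near an irrational number. Equivalently, write $g_{\beta_1}^q=\mathrm{id}+\psi$ with $\psi$ small. Replace $g_{\beta_1}$ by $h\circ g_{\beta_1}$, where $h$ is $C^r$-close to the identity and supported in an arc $J$ disjoint from its first $q-1$ images. Then the $q$-th iterate is altered only on $J$, where it becomes $(\mathrm{id}+\psi)$ composed with $h$. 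We choose $h$ so that this displacement function on $J$ has exactly $2N$ nondegenerate zeros with alternating sign of derivative, and is nonzero elsewhere, adding a small constant shift outside $J$ if necessary. Each zero gives a hyperbolic orbit of period $q$, alternating sinks and sources, so we get exactly $2N$ hyperbolic periodic orbits and hence a Morse--Smale, structurally stable diffeomorphism.

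Finally we interpolate. Set $f_\beta=h_{\chi(\beta)}\circ g_\beta$, where $\chi$ is a bump equal to $1$ at $\beta_1$ and supported near $\beta_1$. This family is close to $(g_\beta)$ in the $C^k$ topology on families.

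\emph{Main obstacle.} The key point is making $h$ $C^r$-small while the displacement $\psi$ of $g_{\beta_1}^q$ is only small on a non-tiny arc. A $C^r$-small $h$ can create zeros only where $|\psi|$ is already tiny, of size comparable to $\|h-\mathrm{id}\|_{C^r}$ times powers of the length of $J$. So we need parameters close to $\beta_0$ where $g^q$ is $C^r$-close to the identity on an arc of fixed length. For $r\ge 2$ we would try first to use the fact that $\rho$ is an interior point of the set $\{\rho(g_\beta)\}$. By Denjoy--Herman (or simply $C^0$ closeness via continued-fraction denominators $q_n$), $g_{\beta_0}^{q_n}$ is $C^0$-close to $R_{\,q_n\rho}$ with $\|q_n\rho\|\to 0$. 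One then tunes $\beta$ so that $g_\beta^{q_n}$ has a fixed point. Since $g^{q_n}$ is a composition of $q_n$ maps along disjoint short arcs, the derivative on a small arc stays controlled by distortion bounds. Inserting $h$ of $C^r$-size $\e$ on one of the $q_n$ disjoint arcs then yields the required $2N$ transversal zeros. If $C^r$ control fails for $r=1$, we use the Pugh closing-type fact that $C^1$-perturbations supported on wandering-disjoint arcs of length $\sim 1/q_n$ can change the derivative by a factor close to $1$. That is enough to create the alternating nondegenerate zeros.
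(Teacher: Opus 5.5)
There is a genuine gap, and it is exactly the point you call the main obstacle. For $r\ge 2$, none of your remedies produces a parameter $\beta_1$ at which $g_{\beta_1}$ is $C^r$-close to a map whose $q$-th iterate is the identity, and without that the surgery in Step~2 cannot work.

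To see why it is needed: on $J$ you have $f^q=h\circ g_{\beta_1}^q$, so
$(f^q-\mathrm{id})''=(1+O(\varepsilon))\psi''+O(\varepsilon\,\|Dg^q_{\beta_1}\|^2)$.
Wherever $|\psi''|$ is not already of order $\varepsilon$, the new displacement is strictly convex or concave and has at most two zeros. Moreover $h$ moves points by at most $\varepsilon|J|$, so $\psi$ itself must already be tiny there.

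The claim that $g_{\beta_1}$ ``can be conjugated close to $R_{p/q}$'' is not a standard fact at a parameter with rational rotation number, and your substitutes do not supply it:
\begin{enumerate}
\item The Denjoy--Koksma inequality only gives $|\log Dg^{q_n}|\le\operatorname{Var}\log Dg$, a bounded derivative rather than one close to $1$.
\item Bounded distortion only gives $|D^2g^{q_n}|\lesssim 1/|J|$ on an arc $J$ whose first $q_n$ images are disjoint, which is large, not small.
\item The Pugh-type trick of spreading a derivative correction of size $\sim V/q$ over $q$ disjoint arcs of length $\sim 1/q$ works only in $C^1$: those corrections have second derivatives of order $V$, so they are not $C^2$-small.
\end{enumerate}
Separately, if $\beta_1$ lies in the interior of a plateau, $g_{\beta_1}$ typically has hyperbolic periodic orbits, and no ``small constant shift'' removes them.

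The missing idea is a smooth linearization theorem, applied at an \emph{irrational} parameter. The paper's route is:
\begin{enumerate}
\item Make the family $C^\infty$, with nonconstant rotation number, near $\beta_0$.
\item Take $\beta_1$ nearby with Diophantine rotation number $\alpha$, and use the Herman--Yoccoz theorem to write $g_{\beta_1}=\phi^{-1}\circ R_\alpha\circ\phi$ with $\phi$ a $C^\infty$ diffeomorphism.
\item With $\phi$ fixed, $\phi^{-1}\circ R_{p/q}\circ\phi$ is $C^r$-close to $g_{\beta_1}$ for $p/q$ close to $\alpha$, and its $q$-th iterate is exactly the identity.
\item Compose $R_{p/q}$ with the time-$\delta$ map of an $R_{p/q}$-invariant flow having $N$ hyperbolic sinks and $N$ hyperbolic sources in each fundamental domain. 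For small $\delta$ this is a $C^r$-small perturbation with exactly $2N$ hyperbolic periodic orbits.
\item Insert it into the family with a parameter bump, as in your interpolation step.
\end{enumerate}
So the order is the reverse of yours. The rational rotation number is created by perturbing in the linearizing chart, not sought among the parameters of the original family. That is what makes the required $C^r$-closeness to a periodic map automatic.
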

\begin{proof}
    If the rotation number is constant in a neighborhood of $\beta_0$, the family can be perturbed so that it becomes nonconstant. Furthermore, we can make the perturbed family $C^\infty$-smooth in both the parameter and the phase variable. After this perturbation, there is $\beta_1$ in an $\e$-neighborhood of $\beta_0$ such that $\rho(f_{\beta_0})$ is Diophantine. By the Herman--Yoccoz theorem (we refer to the Yoccoz version from~\cite{Y}),
the new map $g_{\beta_1}$ is $C^\infty$-smoothly conjugate with a pure rotation. In these normalizing smooth coordinates,
the map $g_{\beta_1}$ can be turned into a rational rotation $R_{p/q}$ by an arbitrarily small perturbation. Another arbitrarily small perturbation turns this rational rotation into a structurally stable map with $2N$ periodic orbits: it suffices to take a composition of $R_{p/q}$ with a time-$\delta$ map of a $R_{p/q}$-invariant flow that has $N$ hyperbolic sinks and $N$ hyperbolic sources in each fundamental domain of~$R_{p/q}$. Such two-step perturbation of $g_{\beta_1}$ can be realized by an arbitrary small perturbation of the family. The perturbed family $(f_\beta)$ has exactly~$2N$ hyperbolic periodic orbits for~$f_{\beta_1}$.
\end{proof}

\begin{rem}\label{rem:supp}
We can ensure that the perturbation in Lemma~\ref{lem:pert} is supported in an arbitrarily small neighbourhood of $\beta_0$ in the parameter space, provided that this neighborhood is chosen in advance. The only modification in the proof is that the perturbed family becomes $C^\infty$-smooth only in a neighborhood of $\beta_0$, which is enough to apply the Yoccoz theorem without loosing regularity.
\end{rem}

\begin{proof}[Proof of Theorem~\ref{thm:main}]

Let $(U_n)_{n\in\nn}$ be a countable base of topology in the parameter space~$B$. Fix some $U_n$ and consider an arbitrary family $F = (f_\beta)_{\beta\in B}$. For each $U_n$, either $\rho(f_\beta)$ is rational for every $\beta \in U_n$, or there is $\beta_0 \in U_n$ for which $\rho(f_{\beta_0})$ is irrational.
In the second case, Lemma~\ref{lem:pert} applied to a one-parameter subfamily passing through $f_{\beta_0}$ yields a perturbed family that has exactly $2N$ orbits at some parameter value $\beta_1 \in U_n$, all hyperbolic.\footnote{Note, however, that at another parameter value that corresponds to the same rational rotation number our perturbed family may have more than~$2N$ hyperbolic periodic orbits.} 

Denote by $\mathcal{V}_{n,N}$ the open set of families that, for some $\beta \in U_n$, have a structurally stable diffeomorphism with exactly~$2N$ periodic orbits and denote by $C_n$ the set of families that have only rational rotation numbers for $\beta\in U_n$.\footnote{If $U_n$ is connected, $\rho(f_\beta)$ is constant on~$U_n$ for $F \in C_n$, but we do not use this in the proof.} Observe that any family that is not in $\operatorname{Int}C_n$ can be approximated by a family not in $C_n$ and another perturbation yields a family in $\mathcal{V}_{n,N}$, via Lemma~\ref{lem:pert}. Hence, the set $\mathcal{U}_{n,N} = (\operatorname{Int}C_n) \cup \mathcal{V}_{n,N}$ is open and dense in the space of families. For a family $F = (f_\beta)$ in the residual set
\[
\mathcal{R} = \bigcap_{n,N} \mathcal{U}_{n,N} =
\bigcap_n \left(\operatorname{Int} C_n \cup \bigcap_N \mathcal{V}_{n,N}\right)
\]
we have the following: for each~$U_n$, either $\rho(f_\beta)$ is rational on the whole~$U_n$, or for each $N$ there is $\beta_N \in U_n$ such that $f_{\beta_N}$ is structurally stable and has exactly~$2N$ periodic orbits.

Consider a family $F \in \mathcal{R}$ and a $\beta_0$ such that $\rho(f_{\beta_0})$ is irrational. For any neighborhood $W$ of $\beta_0$ there is $U_n$ such that $\beta_0 \in U_n \subset W$. Hence, for each $N\ge 1$, this $U_n$ contains a $\beta_N$ that corresponds to a structurally stable diffeomorphism with exactly~$2N$ hyperbolic periodic orbits. That is, $\beta_0$ is approximated by parameter values at which $f_\beta$ has~$2N$ hyperbolic periodic orbits.
\end{proof}

\subsection{Any number of periodic points is observed when \texorpdfstring{$r \ge 2$}{r >= 2}}

\begin{prop}\label{prop:all_numbers}
For a generic $l$-parameter $C^r$-family of orientation preserving circle diffeomorphisms, with $2 \le r \le +\infty$, for every integer $N \in \mathbb N \cup \{0\}$, every parameter value that corresponds to an irrational rotation number is accumulated by parameter values that correspond to diffeomorphisms with exactly~$N$ periodic orbits.
\end{prop}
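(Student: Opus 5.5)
We argue with the same Baire-category scheme as in the proof of Theorem~\ref{thm:main}, but with a perturbation lemma adapted to an arbitrary number $N$ of periodic orbits. Let $(U_n)$ be a countable base of the parameter space and $C_n$ the set of families whose rotation number is rational on all of $U_n$. The difficulty is that a diffeomorphism with an odd number of orbits, or with zero orbits, is never structurally stable. So the set of families having, for some $\beta\in U_n$, exactly $N$ periodic orbits is not open, and we cannot simply intersect open dense sets of that form. Instead we use an open condition that forces such parameter values to exist.

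\textbf{Open condition.} Let $\mathcal V_{n,N}$ be the set of families for which there is a path $\gamma\colon[0,1]\to U_n$ with the following properties:
\begin{itemize}[label={}]
\item $f_{\gamma(0)}$ is structurally stable with exactly $2M$ hyperbolic periodic orbits, where $2M\ge N$;
\item $f_{\gamma(1)}$ is structurally stable with a different rotation number;
\item the one-parameter subfamily $(f_{\gamma(t)})$ is generic in the sense of Brunovsk\'y: all maps are Kupka--Smale except finitely many, and each exceptional map has a single quadratic saddle-node orbit.
\end{itemize}
This set is open. Along the path, the rotation number $p/q$ of $f_{\gamma(0)}$ eventually changes. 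Each parameter value where a bifurcation occurs changes the number of orbits by $\pm1$ at the parabolic moment and by $\pm2$ on crossing, and before leaving rotation number $p/q$ the count must drop to $0$ orbits. Counting through the sequence $2M, 2M-1, 2M-2,\dots$, every integer between $0$ and $2M$ is attained, in particular $N$. For $N=0$, the value $\gamma(t)$ just beyond the last saddle-node, where $\rho$ passes through irrational values, is also admissible. The step to justify here is that along a Brunovsk\'y-generic path the count can only change by collisions of neighbouring orbit pairs. Intermediate values are then automatic, because each parabolic orbit is isolated in the parameter.

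\textbf{Density.} Let $F\notin \operatorname{Int}C_n$. First perturb $F$ so that some $\beta_0\in U_n$ has irrational rotation number. Lemma~\ref{lem:pert}, with Remark~\ref{rem:supp}, then gives $\beta_1\in U_n$ where $f_{\beta_1}$ is structurally stable with $2M$ orbits, $2M\ge N$. Since the rotation number is not locally constant near $\beta_0$, we can pick $\beta_2\in U_n$, arbitrarily near the previous points, with a different rotation number that is rational and realized by a structurally stable map; such maps are dense by Kupka--Smale. Join $\beta_1$ and $\beta_2$ by a path and perturb the family along this path to make the one-parameter restriction Brunovsk\'y-generic. By transversality, this is possible by an arbitrarily small perturbation of the $l$-parameter family. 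Hence $\operatorname{Int}C_n\cup\mathcal V_{n,N}$ is open and dense.

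\textbf{Conclusion.} Intersecting over $n$ and $N$ gives a residual set. For a family in this set and $\beta_0$ with irrational rotation number, every small basic $U_n\ni\beta_0$ is not contained in $C_n$, because $\beta_0\in U_n$ has irrational rotation number. So $U_n$ lies in some $\mathcal V_{n,N}$ and therefore contains a parameter value with exactly $N$ orbits.

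The main obstacle is the path-genericity step: realizing Brunovsk\'y genericity of a one-parameter restriction by perturbing the $l$-parameter family, and checking that this property is open for a compact path. Openness follows because quadratic saddle-nodes unfold stably. For the perturbation, I would try a parametric transversality argument applied to the pullback family $t\mapsto f_{\gamma(t)}$, extended to a neighborhood of the path using a tubular cutoff.
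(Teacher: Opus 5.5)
\textbf{Gap: openness of $\mathcal V_{n,N}$.} Your Baire scheme depends on $\mathcal V_{n,N}$ being open, and as defined it is not.

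Read literally, $\mathcal V_{n,N}$ is empty. The endpoint conditions force the rotation number along $\gamma$ to sweep a nondegenerate interval. For every rational $r$ in that interval, the boundary points in $(0,1)$ of the closed set $\{t : \rho(f_{\gamma(t)}) = r\}$ are maps all of whose periodic orbits are non-hyperbolic: a hyperbolic orbit would persist and keep $\rho = r$ nearby. So infinitely many maps on the path fail to be Kupka--Smale, not finitely many.

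If you correct the definition to Brunovsk\'y's actual property (countably many exceptional parameters, each with a single quadratic saddle-node), the condition for a fixed path is still not open. Take a Diophantine parameter on the path. The mechanism of Lemma~\ref{lem:pert} (Herman--Yoccoz linearization) lets an arbitrarily small perturbation turn that map into a rational rotation, which has a whole circle of non-hyperbolic periodic points. ``Quadratic saddle-nodes unfold stably'' controls one period at a time, not all periods uniformly.

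The repair is to require non-degeneracy only for orbits of period $q$, where $p/q = \rho(f_{\gamma(0)})$. This is all your counting uses, since up to the first exit from $p/q$ every periodic orbit has period $q$. On a compact arc there are finitely many period-$q$ exceptional parameters, and this restricted condition is open and dense; it is the same property used in Section~\ref{sec:p_lemma} and Remark~\ref{rem:nondenfam}. Take $\gamma$ to be a segment in a convex $U_n$ so that the restriction is a $C^r$-family. A small correction to the counting: at the exit parameter there is exactly one (parabolic) orbit, not zero. The value $0$ comes from the nearby irrational parameters.

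\textbf{Comparison with the paper.} With this repair your argument is valid and differs from the paper's mainly in packaging. The paper never needs openness of a path condition. It fixes countably many segments $\gamma_n$ joining points of a countable dense subset of $B$. The families whose restriction to every $\gamma_n$ is Brunovsk\'y-generic form a residual set: these are preimages of a residual set under the continuous restriction map, whose image near a given restriction fills a neighbourhood. The paper intersects this set with the residual set of Theorem~\ref{thm:main}. It then uses openness of structural stability in $\beta$ to place the two structurally stable parameters (one with $2N$ hyperbolic orbits, one with another rotation number) at the endpoints of some $\gamma_n$. Your existential quantifier over paths is what forces you to prove openness. The paper's choice of fixed segments lets it use the full, non-open Brunovsk\'y genericity directly.
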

\begin{proof}
For even $N>0$ the claim follows from Theorem~\ref{thm:main}. The claim for $N=0$ is trivial since every parameter value that corresponds to an irrational rotation number is accumulated by other parameter values that correspond to irrational rotation numbers and hence to $0$ periodic orbits. For the case of 1-parameter families the claim for odd $N$ follows from Theorem~\ref{thm:main} and the characterization of generic families in~\cite{B1}.\footnote{The result of~\cite{B1} does not cover the case $r = +\infty$, but this gap is covered in the proof of Lemma~\ref{lem:trans} below.} We reduce the general case of odd $N$ to the case of 1-parameter families.

Consider a segment $\gamma$ in the parameter base $B$ and identify it with $I = [-1,1]$ by an affine map. Fix a residual set $\mathcal{R}$ of 1-parameter $C^r$-families. It is easy to see that the set $\tilde{\mathcal{R}}$ of $l$-parameter families that yield an element of $\mathcal{R}$ in restriction to $\gamma$ is residual. Indeed, without loss of generality $\mathcal{R}$ decomposes as $\cap_j \mathcal {U}_j$, where $\mathcal {U}_j$ are open and dense. Restriction to $\gamma$ is a continuous map from $l$-parameter families to $1$-parameter families, so the preimages of $\mathcal {U}_j$ are open. The preimages are dense because a small perturbation of a restriction to~$\gamma$ can be realized by a small perturbation of the $l$-parameter family.

Now, consider a countable dense set in $B$ and join every pair of points in it by a segment. Enumerate these segments to obtain a set of segments $(\gamma_n)_{n\in{\mathbb N}}$ and apply the previous construction with the same $\mathcal R$. Namely, $\mathcal R$ consists of families whose non-Morse-Smale diffeomorphisms with rational rotation number are those with a single nondegenerate parabolic orbit, with a saddle node bifurcation unfolding when the parameter changes. We obtain residual sets $\tilde{\mathcal{R}}_n$ of $l$-parameter families that have the same property in restriction to~$\gamma_n$. The intersection $\tilde{\mathcal{R}} = \cap_n \tilde{\mathcal{R}}_n$ is the required residual set of $l$-parameter families. Indeed, let $F \in \tilde{\mathcal{R}}$ and let $\rho(f_{\beta_0})$ be irrational. Then, as was shown in the proof of Theorem~\ref{thm:main}, in any neighborhood $W$ of $\beta_0$ we can find values $\beta_1, \beta_2$ that correspond to structurally stable diffeomorphisms with different rotation numbers. Moreover, we can take $\beta_1, \beta_2$ being the endpoints of some $\gamma_n$ and, moreover, with $f_{\beta_1}$ having~$2N$ hyperbolic orbits. Then, in restriction to $\gamma_n$ we obtain a 1-parameter family with nonconstant rotation number, where the number of periodic orbits must drop from $2N$ to $1$ at saddle-node bifurcation points before the rotation number can start to change. Hence, all possible numbers of periodic orbits from $2N$ to $1$, in particular~$N$, are observed for $f_\beta$ with ${\beta \in W}$.
\end{proof}

\subsection{Corollaries on structurally unstable families}
It is not difficult to see that among finite-parameter families some are weakly structurally stable: e.g., it is possible that for $F = (f_\beta)$ all $f_\beta$ are structurally stable. A less trivial example can be seen for one-parameter families for $r\ge 2$: consider a family where the rotation number is constant and rational and the maps $f_\t$ are structurally stable for all $\t$ except a finite set of values where saddle-node bifurcations happen. However, global families with nonconstant rotation number form an open set, and we will show that families in this open set are not weakly structurally stable.

\begin{prop}\label{prop:unstable}
If the rotation number assumes some irrational values in a family $F = (f_\beta)_{\beta \in B}$, then this family is not weakly structurally stable.
\end{prop}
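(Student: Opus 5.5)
We argue by contradiction, using Theorem~\ref{thm:main} and Lemma~\ref{lem:pert}. Suppose $F$ is weakly structurally stable, i.e.\ every family $G$ in some neighborhood of $F$ is weakly equivalent to $F$. Conjugate diffeomorphisms have the same number of periodic orbits, since the conjugating homeomorphism $H_\beta$ maps periodic orbits bijectively to periodic orbits. So a weak equivalence $H=(h,H_\beta)$ between $F$ and $G$ preserves the function $\nu_F(\beta)=\#\{\text{periodic orbits of } f_\beta\}$, in the sense that $\nu_G\circ h=\nu_F$. In particular, the set of values $\{\nu_F(\beta):\beta\in B\}\subset \nn\cup\{0,\infty\}$ is a weak equivalence invariant.

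The key step is to show that $\nu_F$ takes only finitely many finite values. If it does, we pick $N$ with $2N$ larger than all finite values of $\nu_F$. Since the rotation number of $F$ takes an irrational value at some $\beta_0$, and irrational rotation numbers persist on open dense subsets of families in a neighborhood of $F$ by continuity of $\rho$, Lemma~\ref{lem:pert} gives an arbitrarily close family $G$ with a diffeomorphism $g_{\beta_1}$ having exactly $2N$ periodic orbits. Then $G$ is not weakly equivalent to $F$, which is a contradiction.

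Finiteness of the finite values of $\nu_F$ need not hold for $F$ itself. To get around this, we do not try to prove it; we compare two nearby families instead. By Theorem~\ref{thm:main}, there is a generic $F'$ close to $F$ whose values of $\nu_{F'}$ include all of $2\nn$. Weak stability forces $F\sim F'$, so $F$ realizes every even number as well. This is not yet a contradiction, so a finer invariant is needed.

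The finer invariant is accumulation. In $F'$, an irrational parameter $\beta_0$ is accumulated by parameters with exactly $2N$ orbits for every $N$. The homeomorphism $h$ maps irrational-rotation parameters to irrational-rotation parameters, because zero periodic orbits plus an orbit-structure match means the rotation numbers are irrational on both sides. It also preserves accumulation. Hence $F$ has the same accumulation property at some irrational parameter. Then we construct a different close family $F''$ in which this property fails near all irrational parameters in a fixed compact region. A first attempt is to note that it suffices to find nearby families that are not equivalent to each other. Take $G_1$ generic from Theorem~\ref{thm:main} and $G_2$ obtained from $G_1$ by a perturbation supported near a single parameter, using Remark~\ref{rem:supp}. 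The aim is to create at $\beta_1$ a diffeomorphism with $2N$ orbits where, for a parameter region of the same rotation number, the maximal number of orbits differs. The cleanest approach is to count, for each rational rotation segment component, the number of periodic orbits, and to use the fact that a structurally stable component with $2M$ orbits, where $M$ is huge, can be inserted by Remark~\ref{rem:supp} locally, while leaving far-away combinatorics untouched. This changes a countable combinatorial invariant, namely the multiset of orbit counts attached to rational plateaus ordered along $B$, in a way that $h$ cannot match, because $h$ must respect the ordering of rotation numbers via $\rho$ monotonic structure.

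I expect the main obstacle to be making this invariant rigorous. The plan is to realize two nearby families that differ in a quantity $h$ must preserve, such as the number of periodic orbits at parameters inside a fixed rational plateau that is uniquely identifiable, for example as the one containing a prescribed large orbit count.
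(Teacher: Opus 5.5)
Your proposal has a genuine gap: it never produces two families near $F$ that are provably not weakly equivalent. You correctly note that finding such a pair would suffice, but you never find one.

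The invariants you try are the wrong ones. Finite orbit counts cannot work, as you observe yourself. By Theorem~\ref{thm:main}, a generic nearby family already realizes every even count near every irrational parameter, so inserting a diffeomorphism with $2N$ orbits changes nothing detectable. The accumulation route needs a nearby family $F''$ in which some irrational parameter is not accumulated by parameters with exactly $2N$ orbits. You do not construct such an $F''$, and that construction is the entire difficulty. The final ``multiset of orbit counts on rational plateaus ordered along $B$'' is never defined precisely, and its invariance is unjustified for three reasons: $\rho$ need not be monotone in the parameter, $B$ may be multidimensional, and $h$ is only a homeomorphism of parameter spaces with no reason to respect such an ordering. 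One side remark: continuity of $\rho$ does not make irrationality persist under perturbation. A family with constant irrational $\rho$ is close to one with constant rational $\rho$. You do not need that claim anyway, since Lemma~\ref{lem:pert} applies directly at $\beta_0$.

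The missing idea is to use the value $\nu=\infty$, which your function $\nu_F$ already allows. Take the intermediate step in the proof of Lemma~\ref{lem:pert}: make $\rho$ nonconstant, pass to a Diophantine parameter $\beta_1$, linearize by Herman--Yoccoz, and replace the irrational rotation by a rational one in those smooth coordinates. This gives a family $\tilde F$, arbitrarily close to $F$, with $\tilde f_{\beta_1}$ smoothly conjugate to $R_{p/q}$. Every point of $S^1$ is then periodic, so $\nu_{\tilde F}(\beta_1)=\infty$. On the other hand, families in which every $g_\beta$ has only finitely many periodic orbits are generic for $r>l+1$ (Remark~\ref{rem:isolated}), hence dense in all the spaces of families considered. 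So there is such a $G$ arbitrarily close to $F$ with $\infty\notin\nu_G(B)$. Any weak equivalence would give $\nu_G\circ h=\nu_{\tilde F}$, so $\tilde F$ and $G$ are not weakly equivalent, and therefore $F$ is not weakly structurally stable. This is exactly the paper's proof, and it needs none of the combinatorics you were trying to set up.
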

\begin{proof}
Consider $\beta_0\in I$ such that $\rho(f_{\beta_0})$ is irrational. Arguing as in the proof of Lemma~\ref{lem:pert}, we obtain a perturbed family $\tilde{F}$ such that there is $\beta_1$ near $\beta_0$ with $\tilde{f}_{\beta_1}$ being a pure \emph{rational} rotation in some smooth coordinate. On the other hand, there is a family $G$ near $F$ that has at most finite number of periodic orbits at each value of the parameter; such families are generic\footnote{For one-parameter families and finite~$r$ this genericity is established in~\cite{B1}. It is not difficult to prove it in general, see Remark~\ref{rem:isolated} below.} in the space of $C^r$-families if $r > l+1$ and hence dense in the space of $C^k$-families of $C^r$ circle diffeomorphisms for all~$k, r$. Therefore, $F$ is not structurally stable.
\end{proof}

\begin{rem}
The strong structural stability of families in which the rotation number is not constant was essentially ruled out by~\cite[Ch.III.5]{NPT}. %Since~\cite{NPT} does not have exactly this claim, some explanation is in order. Under the assumption that the family is strongly stable, the endpoints $a, b$ of $I$ must correspond to structurally stable diffeomorphisms, and the other endpoint of the maximal segment $[a, c] \subset I$ where the rotation number is constant corresponds to a map with only one periodic orbit, which is parabolic. On the one hand, in the absence of other periodic orbits this parabolic orbit generates a Mather invariant of smooth conjugacy, and on the other hand, a theorem of Newhouse, Palis, and Takens says that the homeomorphism that establishes the strong equivalence of the families must also establish $C^1$-conjugacy of the maps with parabolic orbits. If the Mather invariants of $f_c$ and the $\tilde{f}_{\tilde{c}}$ do not coincide, the families will not be strongly equivalent, but the Mather invariant is changed by a small perturbation, which yields a contradiction. 
Of course, now this claim can be viewed as a trivial corollary of Proposition~\ref{prop:unstable}.
\end{rem}

It is not difficult to see that a generic germ of our family, say, at $\beta = 0$ has a structurally stable diffeomorphism at $\beta = 0$ and hence is itself strongly structurally stable. However, if we fix an irrational $\xi$ and consider the space of germs such that $\rho(f_0) = \xi$, we will see that generic germs there are not weakly structurally stable.

\begin{prop}
Generic local families $(f_\beta)_{\beta\in(\mathbb{R}^l, 0)}$ with $\rho(f_0) = \xi \notin \mathbb{Q}$ are not weakly structurally stable.
\end{prop}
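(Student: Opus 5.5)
The plan is to show that in the space of germs with $\rho(f_0)=\xi$, the germs $F$ which at $\beta=0$ (or arbitrarily close to $0$) realize an arbitrarily large number of periodic orbits, in a \emph{locally} controlled way, form a residual set, and then to show that no such germ is weakly structurally stable. Weak equivalence of germs means that a homeomorphism $h$ of parameter neighbourhoods fixing $0$ conjugates $f_\beta$ with $g_{h(\beta)}$. Consequently, for every neighbourhood $W$ of $0$ the set of numbers of periodic orbits realized by $f_\beta$, $\beta\in W$, is a topological invariant of the germ, in the sense that it is preserved under passing to the ``limit'' set $\bigcap_W\{\#\mathrm{Per}(f_\beta):\beta\in W\}$.

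First, genericity. Following the proof of Theorem~\ref{thm:main}, let $\mathcal{V}_{m,N}$ be the set of germs that have, for some $\beta$ with $|\beta|<1/m$, a structurally stable $f_\beta$ with exactly $2N$ periodic orbits. This set is open in the germ space: choose a representative and use openness of structural stability at that single $\beta$. It is also dense. Since $\rho(f_0)=\xi$ is irrational, Lemma~\ref{lem:pert} together with Remark~\ref{rem:supp} produces, arbitrarily close to $0$, a perturbation with $2N$ hyperbolic orbits. We must keep the constraint $\rho(f_0)=\xi$. The perturbation in Remark~\ref{rem:supp} can be supported on a small ball around some $\beta_0\ne 0$ with $\rho(f_{\beta_0})$ irrational that avoids $0$, provided the rotation number is nonconstant near $0$. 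If instead it is constant ($\equiv\xi$) near $0$, we first perturb the family to make it nonconstant while fixing $f_0$; for example, compose with rotations $R_{c|\beta|^2}$ and use monotonicity of $\rho$ in the rotation parameter. Then $\mathcal R=\bigcap_{m,N}\mathcal{V}_{m,N}$ is residual, and for $F\in\mathcal R$ every neighbourhood of $0$ contains structurally stable $f_\beta$ with exactly $2N$ periodic orbits, for all $N$.

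Next, non-stability. Take $F\in\mathcal R$ and an arbitrary neighbourhood of $F$. Within it I would find a germ $G$ with $\rho(g_0)=\xi$ for which the number of periodic orbits of $g_\beta$ is bounded by some $K$ for all $\beta$ near $0$. Such a $G$ cannot be weakly equivalent to $F$, because $F$ exhibits $2K+2$ orbits arbitrarily close to $0$ and $h$ maps neighbourhoods of $0$ onto neighbourhoods of $0$. Equivalently, it suffices to show that $\mathcal R$ has empty interior within the space of germs. Given any $F$, I would construct $G$ near $F$ and with $g_0=f_0$ in which, for $\beta$ near $0$, each $g_\beta$ has a uniformly bounded number of periodic orbits. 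A natural attempt is to modify the family away from $\beta=0$ only: for $\beta\ne0$ near $0$, replace $f_\beta$ by a family that is generic in the sense of Remark~\ref{rem:isolated} (finitely many periodic orbits), while keeping the degree of degeneracy bounded.

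The main obstacle is that uniform boundedness is hard to guarantee. Near $\beta=0$, rational rotation numbers $p/q$ with $q\to\infty$ appear, and a generic family may have many orbits there. I therefore expect to argue differently: show instead that $\mathcal R$ is not open, since any germ admits nearby germs outside any given $\mathcal V_{m,N}$. As a fallback I would argue as in Proposition~\ref{prop:unstable}. Take $F\in\mathcal R$. By Herman--Yoccoz at a Diophantine $\beta_1$ near $0$, perturb so that $f_{\beta_1}$ becomes a rational rotation, hence has a continuum of periodic orbits. Every germ in $\mathcal R$ arbitrarily close to it has only finitely many periodic orbits at each parameter, since Remark~\ref{rem:isolated} applies within the constraint $\rho(f_0)=\xi$. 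An equivalence would have to send the parameter with infinitely many periodic orbits to one with finitely many, which is a contradiction. This fallback is likely the actual intended proof, and the key check is that both perturbations keep $f_0$ fixed, which holds because they are supported away from $\beta=0$.
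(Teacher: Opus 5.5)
\paragraph{Gap: the fallback is not an argument about germs.} The only argument you actually complete is the fallback at the end, and it does not work for germs. You perturb at a \emph{single} Diophantine parameter $\beta_1\neq 0$ to make $f_{\beta_1}$ a rational rotation. Weak equivalence of germs only requires a homeomorphism $h$ between \emph{some} neighbourhoods of $0$ with $h(0)=0$, and these can be taken small enough to miss $\beta_1$. So ``the map at $\beta_1$ has a continuum of periodic orbits'' is not a property of the germ. Your perturbed germ is not distinguished from a nearby germ with finitely many periodic orbits at each parameter, and the claimed contradiction does not arise. Your first paragraph already names the right invariant, $\bigcap_W\{\#\mathrm{Per}(f_\beta):\beta\in W\}$, and a single $\beta_1$ is invisible to it.

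\paragraph{The missing idea.} You need to do the Herman--Yoccoz perturbation at infinitely many parameters accumulating at $0$; this is how the paper argues.
\begin{itemize}
\item Restrict to a one-parameter subfamily through $0$ and pick $\theta_k\to0$, $\theta_k\neq0$, with $\rho(f_{\theta_k})$ irrational. Such $\theta_k$ always exist, since $\rho$ is either $\equiv\xi$ near $0$ or takes an interval of values in every neighbourhood.
\item Using Remark~\ref{rem:supp}, perturb in pairwise disjoint neighbourhoods $I_k\not\ni 0$ of the $\theta_k$. Let the sizes decay fast enough that the total is still an $\varepsilon$-small perturbation of the $l$-parameter family. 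Then $f_0$, and hence $\rho(f_0)=\xi$, is untouched.
\item The new germ has parameters $\hat\theta_k\to 0$ at which the map is conjugate to a rational rotation, so $+\infty$ lies in your invariant set. Because $h$ is a homeomorphism with $h(0)=0$, weak equivalence preserves this.
\item A generic nearby germ, via Remark~\ref{rem:isolated} applied with perturbations for each period $q$ supported away from $\beta=0$ (possible because $f_0$ has no periodic points), has no such sequence. That is the contradiction.
\end{itemize}

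\paragraph{Smaller points.}
\begin{itemize}
\item With this argument you need neither your residual set $\mathcal R$ nor the first half of the proposal: it shows that every germ with $\rho(f_0)=\xi\notin\mathbb Q$ is weakly unstable.
\item As you define it, $\mathcal R=\bigcap_{m,N}\mathcal V_{m,N}$ does not imply finitely many periodic orbits at each parameter. You would have to intersect it with the residual set of Remark~\ref{rem:isolated}.
\item Your earlier routes are not carried out. These are the nearby germ with a uniform bound $K$, which you yourself note is hard, and showing ``$\mathcal R$ is not open'', which just restates the goal.
\end{itemize}
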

\begin{proof}
Consider a representative $F$ of any such germ and a one-parameter subfamily $(f_\theta)$ passing through~$0$. Fix a sequence $\t_k \to 0$ of parameter values with irrational $\rho(f_{\t_k})$. By Remark~\ref{rem:supp}, we can perturb the family in disjoint neighborhoods~$I_k$ of the parameter values $\t_k$ so that the new one-parameter family $(\widehat{f}_\t)$ would have parameter values $\widehat{\t}_k \in I_k$ such that $\widehat{f}_{\widehat{\t}_k}$ is conjugate to a pure rational rotation. This perturbation can be realized as a perturbation of the original $l$-parameter $F$. However, the initial germ can also be approximated by a germ that does not admit a sequence $\beta_k \to 0$ that corresponds to diffeomorphisms with infinitely many periodic orbits. Hence, the initial germ cannot be weakly structurally stable.
\end{proof}

\section{A continuum of equivalence classes for generic \mbox{families}}

In this section, when we write ``families'', we mean one-parameter families of orientation preserving diffeomorphisms of~$S^1$. Unless explicitly stated otherwise, we assume that the regularity of the diffeomorphisms is at least~$C^2$, that is, we consider the space of $C^k$-smooth families of $C^r$-diffeomorphisms with~$r \ge 2$. 

\medskip

As was mentioned in the introduction, for any set of families, the cardinality of the set of equivalence classes cannot exceed the cardinality of continuum. On the other hand, it is not difficult to see that for any residual set of families with nonconstant rotation number the set of equivalence classes is uncountable.\footnote{Informally, one can assume the contrary, enumerate the rational numbers, $\mathbb Q\cap [0, 1) = \{q_n\}$, associate with each family a sequence $(a_n)_{n \in \mathbb N}$, where $a_n$ is the maximal number of periodic orbits that appear for diffeomorphisms of rational rotation numbers in a set $\{q_1, q_2, \dots, q_n\}$ inside the family, and then argue that Lemma~\ref{lem:pert} allows to show that a generic family has sequence $(a_n)$ that grows faster than any sequence in an arbitrary given countable family of monotonic sequences, which yields a contradiction.}
But Theorem~\ref{thm:continuum} is a bit more difficult to prove, so we start with a sketch of a proof.

\subsection{Idea of the proof}
We consider an arbitrary nonempty open set $\mathcal U$ of families and a residual subset $\mathcal R$ inside it. Without loss of generality, we may replace $\mathcal U$ by a proper nonempty open subset, and replace $\mathcal R$ by a proper subset residual in $\mathcal U$. Hence, after replacing $\mathcal U$ by a small neighborhood of a family  with nonconstant rotation number, we can assume that there is a segment $J \subset [0,1)$ such that any rotation number in $J$ appears in any family $F \in \mathcal U$. Moreover, after replacing $\R$ by a proper subset, we can assume that $\R = \cap_{n\ge 1} \U_n$, where $\mathcal{U}_n \subset \mathcal{U}$ form a nested sequence of open subsets: $\mathcal{U}_{n+1} \subset \mathcal U_n$.

We will construct a mapping $\Xi$ from the set of finite words of 0s and 1s into $\R$. A sequence $\omega$ from $\{0, 1\}^{\nn}$ can be viewed as a nested sequence of finite words. Each such sequence of words will be mapped into a Cauchy sequence with limit $F_\omega$ in~$\R$. These limit families $F_\omega$ will belong to pairwise different classes of weak equivalence.

In order to describe the invariant that will distinguish the non-equivalent families, we first fix an enumeration of rational numbers in $J$: $J \cap \mathbb Q = \{q_n\mid n\in\nn\}$ and, given a family $F \in \U$, denote by $a_n$ the maximal number of sources that coexist in $F$ for a parameter value that corresponds to the rational rotation number~$q_n$. In principle, it is possible that $a_n = +\infty$, but we will show in Lemma~\ref{lem:parity_stable} below that for a generic family these numbers are finite. Hence, replacing $\R$ by its subset (still residual in $\U$) and taking $F \in \R$, we make sure that we obtain a sequence of numbers. Also, we will deal only with families that have only hyperbolic sources, but in the definition of $a_n$ we count non-hyperbolic sources as well. Denote by $b_n$ the parity of $a_n$: $b_n = [a_n]_2 \in \{0,1\}$.
The parities sequence $(b_n)$ is an obvious invariant of weak topological classification of families.

For a limit family $F_\omega$ the sequence $b = (b_n)$ in general \emph{does not} coincide with~$\omega$. Informally, when we map a nested sequence of finite binary word into families, we set up some elements of the parities sequence using Lemma~\ref{lem:pert}, but we have poor control over which element we are allowed to change next. Moreover, if we want some elements of the sequence to stay fixed, this creates a constraint on further perturbation sizes and also affects which parity sequence element can be modified next.

Here is a game model of this, that will serve us only as an analogy. Two players play a game where they, together, construct a binary sequence in $\{0, 1\}^{\nn}$. The first player starts by writing an arbitrary nonempty binary word, the second responds with a single-digit word, then it is again the 1st player's turn, and so on; after this countable procedure the \emph{final binary sequence} appears. The question is if it is possible for the first player to announce a set $A \subset \{0, 1\}^{\nn}$ with cardinality less than continuum and then ensure that the final sequence sits in~$A$.

The answer is negative. Consider \emph{the strategy of $A$}: a mapping that takes a binary word $w$ into a move (also a binary word) that the 1st player would make given the current finite sequence written by the two players is~$w$; only deterministic strategies are allowed. If one knows this strategy and the final sequence, one can decode the moves made by the second player. Hence, if the strategy of the 1st player is fixed, the map that takes the sequence of the 2nd player's moves into the final sequence is injective, so the set of possible final sequences is of cardinality continuum. 

In the formal argument below, we play the role of the 2nd player, and the 1st player is ``nature'', or rather the axion of choice, that provides an element of the residual set $\R$ that satisfies given constraints on the parity sequence and perturbation size.

\subsection{The two lemmas}
The proof of Theorem~\ref{thm:continuum} relies on the following two lemmas, the proofs of which will be given in Sections~\ref{sec:p_lemma} and~\ref{sec:s_lemma} respectively.

\begin{lem}\label{lem:parity_stable}
There is a residual set $\widehat{\mathcal{R}}$ of families such that for each $F \in \widehat\R$ each number $a_n$ is finite and locally constant: for each $n$ there esists $\varepsilon_n$ such that $a_n$ cannot be changed by an $\varepsilon_n$-perturbation of $F$ in the space of families.
\end{lem}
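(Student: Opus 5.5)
We take $\widehat{\mathcal R}$ to be the Brunovsk\'y-type residual set of one-parameter families from the introduction (and from~\cite{B1}, supplemented by Lemma~\ref{lem:trans} for $r=+\infty$). It consists of families in which every $f_\theta$ is either Kupka--Smale or has a single quadratic saddle-node orbit. These saddle-node maps occur only for isolated values of $\theta$, each with a transversal unfolding, and the endpoints $\theta=\pm1$ correspond to structurally stable maps. We will also intersect with the open dense condition that $f_{\pm1}$ are Morse--Smale. All subsequent claims are made for a fixed $n$ and a fixed $F\in\widehat{\mathcal R}$.

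\emph{Finiteness of $a_n$.} Let $q_n=p/q$. The set $\{\theta:\rho(f_\theta)=q_n\}$ is compact, and by the description above it is a finite union of disjoint segments $[c_i,d_i]$; some of them may degenerate to points, though genericity excludes this. Finiteness follows because every boundary point carries a parabolic orbit, and parabolic parameters are isolated in the compact interval $I$ (or are endpoints of $I$). On each segment, $f^q_\theta-R_{p}$ has only finitely many zeros, since the fixed points of $f^q_\theta$ are hyperbolic or one quadratic parabolic, hence isolated. Moreover, the number of fixed points changes only at the finitely many saddle-node parameters inside the segment. Hence the number of sources takes finitely many values, and $a_n<\infty$.

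\emph{Local constancy.} This is the main step. Let $G$ be an $\varepsilon$-perturbation of $F$. We want to show that, for $\varepsilon$ small, the segments of rotation number $q_n$ for $G$ are close to those for $F$ and that the bifurcation sequence inside them is the same. We will use a compactness/persistence argument on the finitely many special parameters. Near each parabolic parameter $\theta_*$ of $F$ with rotation number $q_n$, the quadratic saddle-node with transversal unfolding persists for $G$ at a nearby $\theta_*'$, and for nearby parameters the local picture is unchanged. This holds because nondegeneracy is an open condition; the unfolding is controlled via the implicit function theorem applied to $\partial_x(g^q_\theta-x)=0$ and $g^q_\theta(x)-x=p$. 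Away from small neighborhoods of these finitely many parameters, every $f_\theta$ with $\rho=q_n$ has only hyperbolic periodic orbits. By compactness and structural stability of Morse--Smale maps, the number of sources of $g_\theta$ then equals that of $f_\theta$, uniformly. Finally, for parameters away from the closed set $\{\rho(f_\theta)=q_n\}$, the rotation number stays $\neq q_n$ under small perturbation. This uses continuity of $\rho$ together with compactness, and the fact that near the segment boundary the map has rotation number $q_n$ only on the side where the saddle-node has created orbits.

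The delicate point is precisely this last boundary control: outside the $F$-segments but near their ends, $G$ must not acquire new parameters with rotation number $q_n$ and extra orbits. Here we would use that, near $\theta_*$, all periodic orbits of $g_\theta$ of rotation number $q_n$ are localized near the parabolic point. Elsewhere on the circle $|g^q_\theta(x)-x-p|$ is bounded below, as it is for $f_{\theta_*}$, so the count in that region is exactly $0$ or $2$ orbits, as dictated by the normal form. Combining the pieces, the number of sources of $g_\theta$ on each $q_n$-segment runs through the same finite sequence of values as for $F$, so $a_n(G)=a_n(F)$ for all $G$ within some $\varepsilon_n$.

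If one prefers to state only that $a_n$ is locally constant, then there is no need to require that $G$ itself lie in $\widehat{\mathcal R}$. The argument above applies to every nearby $C^k$-family of $C^r$ maps, including those that are not generic, since it only uses openness of nondegeneracy at finitely many points and structural stability elsewhere.
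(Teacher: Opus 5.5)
Your proof is correct, but it is organized differently from the paper's.

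\textbf{The paper's route.} The paper splits local constancy into two semicontinuity statements.
\begin{enumerate}
\item $a_n$ cannot drop near $F$, because hyperbolic sources persist.
\item $a_n$ cannot increase. This is proved by contradiction at a single limit parameter. Given $F_m\to F$ with $a_n(F_m)>a_n(F)=N$, one extracts $\theta_m\to\theta_\infty$ and $N+1$ source orbits of $F_m$ at $\theta_m$ converging to periodic orbits of $f_{\theta_\infty}$. Each limit is a hyperbolic source or the nondegenerate saddle-node. Each can absorb at most one of these source orbits: by local uniqueness in the hyperbolic case, and by applying Rolle's theorem twice to $f^q-x$ together with $\partial_{xx}f^q_{\theta_\infty}\neq 0$ in the saddle-node case. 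Finally, $f_{\theta_\infty}$ has at most $N$ candidate orbits, because at a saddle-node parameter the unfolding side carries one more source.
\end{enumerate}

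\textbf{Your route and what it costs.} You instead prove uniform persistence of the whole $q_n$-picture. You cover $I$ by three kinds of sets: small neighbourhoods of the finitely many $q_n$-parabolic parameters, a compact Morse--Smale part of the plateaus, and a compact part where $\rho\neq q_n$. You then show that every nearby $G$ has the same plateaus with the same sequence of source counts. This is stronger than the lemma needs, but it costs more:
\begin{enumerate}
\item the implicit function theorem in $(\theta,x)$, so the perturbation must be small in a topology controlling $\partial_\theta$ and $\partial_{x\theta}$;
\item structural stability of Morse--Smale maps;
\item bookkeeping to make the neighbourhoods uniform and to make the region where $\rho\neq q_n$ compact.
\end{enumerate}
The paper's contradiction argument avoids all of this. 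It only uses $C^2$-convergence of the maps of $F_m$ at $\theta_m$ to $f_{\theta_\infty}$.

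\textbf{Two small corrections.}
\begin{enumerate}
\item You claim that near $\theta_*$ all $q_n$-orbits of $g_\theta$ lie near the parabolic point. This holds only when $\theta_*$ is a plateau endpoint, and you should justify it: a hyperbolic orbit of $f_{\theta_*}$ would persist on both sides, so $\theta_*$ would not be an endpoint. At interior saddle-node parameters you must also count the persisting hyperbolic orbits.
\item Near the parabolic point the count is $0$, $1$ or $2$, not ``exactly $0$ or $2$''. The bound $2$ follows simply from $\partial_{xx}g^q_\theta$ keeping its sign there. That is the paper's Rolle argument in another form, and for the lemma as stated it makes your IFT step dispensable.
\end{enumerate}
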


\begin{lem}\label{lem:set_a_n}
Let $F = (f_\theta)$ be a family in which any rotation number from a nondegenerate segment $J \subset [0, 1)$ appears, let $\{q_n\mid n \in \mathbb N\}$ be the enumeration of rational numbers in~$J$, and let $a_n = a_n(F)$ be the corresponding sequence of maximal numbers of hyperbolic sources possessed by diffeomorphisms $f_\theta$ when $\rho(f_\theta) = q_n$.
 Then for any $\varepsilon>0$ there exist integers $n_0, N$ such that for any $m > N$ the $\varepsilon$-neighborhood of~$F$ contains a perturbed family $\tilde F \in \mathcal{\widehat{R}}$ such that $a_{n_0}(\tilde{F})$ equals~$m$.
\end{lem}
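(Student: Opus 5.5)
The plan is a localized version of the proof of Lemma~\ref{lem:pert}, followed by a correction into $\widehat{\R}$ using Lemma~\ref{lem:parity_stable}. Let $\varepsilon>0$. First I will reduce to the case where the unperturbed family is well behaved: replacing $F$ by an $\varepsilon/3$-close family, I may assume that $F \in \widehat{\R}$ and that $F$ is generic in the sense of Brunovsk\'y. This means that each rational rotation number in $J$ is attained on a finite union of disjoint segments. At their endpoints there is a single quadratic parabolic orbit, and elsewhere all periodic orbits are hyperbolic. Since $J$ is nondegenerate, I can pick $\theta_0$ with $\rho(f_{\theta_0}) \in J$ Diophantine and lying in the interior of $J$. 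Then I choose a small interval $I_0 \ni \theta_0$ on which $\rho(f_\theta) \in \operatorname{Int} J$, and such that the rotation number is not constant on any subinterval of $I_0$ containing $\theta_0$.

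Next I will fix $n_0$ and $N$. By Remark~\ref{rem:supp} and the Herman--Yoccoz argument, after an $\varepsilon/3$-perturbation supported in $I_0$ there is a parameter $\theta_1\in I_0$ at which the diffeomorphism is smoothly conjugate to a rational rotation $R_{p/q}$, with $p/q \in J$. I take $n_0$ to be the index of $p/q$, so that $q_{n_0}=p/q$. In the normalizing coordinates, composing with a time-$\delta$ map of an $R_{p/q}$-invariant flow having $m$ hyperbolic sinks and $m$ hyperbolic sources per fundamental domain yields exactly $m$ sources per fundamental domain.

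The key point is that the size of this last perturbation can be made small uniformly in $m$. One takes the flow $\delta\, v_m$ with $\|v_m\|_{C^r}$ bounded and $\delta = \delta(m)$ small. Equally, one can take a fixed small bump-type perturbation of the family near $\theta_1$ that creates the rotation $R_{p/q}$ on a parameter interval and then splits it into $m$ source--sink pairs. So for $m$ large this step costs less than $\varepsilon/3$; the threshold $N$ absorbs the requirement that the finitely many other $q_{n_0}$-segments of $F$ outside $I_0$ have fewer than $m$ sources. Those segments are unaffected by a perturbation supported near $I_0$, so $a_{n_0}(\tilde F) = m$. I must also check that inside $I_0$, near the segment where $\rho = p/q$, no parameter produces more than $m$ sources. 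This holds by construction: along that segment the map is $R_{p/q}$ composed with the time-$\delta(\theta)$ flow, with $\delta(\theta)$ varied monotonically so that saddle-nodes only reduce the count.

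Finally, $\tilde F$ must be in $\widehat{\R}$. The constructed family has $a_{n_0}=m$ realized by a structurally stable diffeomorphism with only hyperbolic orbits. By structural stability, every nearby family still has at least $m$ sources at a parameter near $\theta_1$. The delicate part, and the main obstacle, is the upper bound: a small perturbation into $\widehat{\R}$ must not create extra sources at rotation number $q_{n_0}$. To handle it, I will arrange that near $\theta_1$ the family is a generic unfolding of the $m$ saddle-nodes, namely transversally moving at distinct parameters. Then the count of $q_{n_0}$-sources is locally determined and robust, and I can approximate by an element of the residual set $\widehat{\R}$ within the remaining $\varepsilon/3$ without changing $a_{n_0}$.
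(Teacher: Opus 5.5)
Your construction only gives the lower bound $a_{n_0}(\tilde F)\ge m$, which is robust because the map at $\theta_1$ is structurally stable. Equality needs that \emph{no} parameter value with $\rho=p/q$ carries more than $m$ sources, and ``this holds by construction'' is not justified.

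First, the mechanism you describe cannot produce saddle-nodes. Since $v_m$ is $R_{p/q}$-invariant, $(R_{p/q}\circ\phi^{\delta}_{v_m})^q=\phi^{q\delta}_{v_m}$, so for every $\delta\ne0$ the periodic points are exactly the zeros of $v_m$. Varying $\delta(\theta)$ therefore never removes a pair and never moves $\rho$ off $p/q$. The $p/q$-interval through $\theta_1$ must then end in the part of $I_0$ where the family is interpolated back to $f_\theta$, and there nothing bounds the number of sources.

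Second, you cannot repair this by imposing a model family on a whole parameter interval. For $C^k$-families with $k\ge1$ that is in general not a small perturbation, since it replaces the $\theta$-derivative of the family by that of the model; for instance, making the family $\equiv h^{-1}R_{p/q}h$ on an interval kills $\partial_\theta f_\theta$ there. A small perturbation can prescribe only the single map at $\theta_1$, as in Lemma~\ref{lem:pert}. Where the $m$ pairs die, and whether new pairs are born, is then governed by data inherited from $F$.

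Third, your $N$ only accounts for $p/q$-intervals outside $I_0$. Nothing prevents $p/q$-intervals inside $I_0$, and the original family typically has some, since $p/q$ is close to $\rho(f_{\theta_0})$ and $\rho$ is nonconstant there. Your perturbation changes the dynamics on those intervals in an uncontrolled way.

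By contrast, the step you call the main obstacle is the easy one. Once a family is nondegenerate for $q_{n_0}$, the argument of Lemma~\ref{lem:parity_stable} makes $a_{n_0}$ locally constant, so moving into $\widehat{\R}$ costs nothing. It does, however, presuppose the global control you are missing.

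The missing idea is to interpolate rather than engineer the exact count, as the paper does. Take $F\in\widehat{\R}$ with $N=a_{n_0}(F)$, and take $\widehat F$ from Lemma~\ref{lem:pert} with merely $a_{n_0}(\widehat F)\ge m$; your construction supplies exactly this. Join them by a homotopy $F_{(s,\cdot)}$ inside the $\varepsilon/2$-ball and perturb it into a two-parameter family as in Lemma~\ref{lem:trans}. Then the $q_{n_0}$-saddle-node locus is a finite union of regular curves, with finitely many cusps, transverse double points, boundary points and horizontal tangencies, all placed on distinct levels $s=\mathrm{const}$.

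Consequently $s\mapsto a_{n_0}(F_{(s,\cdot)})$ is locally constant away from these finitely many levels and jumps by at most $1$ across each, because adjacent regions of the complement of the locus differ by at most one source. A discrete intermediate value argument then gives every integer between $N$ and $a_{n_0}(\widehat F)$, in particular $m$. A level avoiding the special points is nondegenerate for $q_{n_0}$, so it can be pushed into $\widehat{\R}$ without changing $a_{n_0}$.
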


In fact, we will refer to the following corollary rather than to the last lemma itself.

\begin{cor}\label{cor:set_parity}
In the assumptions of Lemma~\ref{lem:set_a_n}, for any residual set $\mathcal{R}$ of families and any $\varepsilon>0$, there exists the smallest $n$ such that the parity number $b_n(F)$ can be changed by an $\varepsilon$-perturbation of the family with the perturbed family being in~$\mathcal{R}$.
\end{cor}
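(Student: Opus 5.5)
Corollary~\ref{cor:set_parity} reduces to showing that the set of indices $n$ whose parity can be changed within the allowed class is nonempty; once it is, the well-ordering of $\mathbb N$ gives a smallest such index. So the plan is to exhibit one index $n_0$ for which both parities of $a_{n_0}$ occur among $\varepsilon$-perturbations of $F$ lying in $\mathcal R$.

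The main tool is Lemma~\ref{lem:set_a_n}, applied with $\varepsilon/2$ in place of $\varepsilon$. It gives $n_0$ and $N$ such that for every $m>N$ there is a family $\tilde F_m\in\widehat{\R}$ in the $\varepsilon/2$-neighborhood of $F$ with $a_{n_0}(\tilde F_m)=m$. We take two consecutive values, $m=N+1$ and $m=N+2$. The resulting families $\tilde F_{N+1}$ and $\tilde F_{N+2}$ have different parities $b_{n_0}$, so at least one of them has parity different from $b_{n_0}(F)$. This presupposes that $b_{n_0}(F)$ is defined, i.e.\ $a_{n_0}(F)<\infty$. If it is not, the statement ``$b_{n_0}$ can be changed'' should be read as ``both parities are realized'', and the argument gives exactly that.

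The remaining problem is that $\tilde F_m$ lies in $\widehat{\R}$ but not necessarily in $\mathcal R$. Lemma~\ref{lem:parity_stable} resolves this. Since $\tilde F_m\in\widehat\R$, there is $\varepsilon_{n_0}(\tilde F_m)>0$ such that $a_{n_0}$ is constant on the $\varepsilon_{n_0}(\tilde F_m)$-ball around $\tilde F_m$. Because $\mathcal R$ is residual, it is dense, so this ball, intersected with the $\varepsilon/2$-ball around $\tilde F_m$, contains a family $\hat F_m\in\mathcal R$. For this family:
\begin{itemize}
\item $a_{n_0}(\hat F_m)=m$, by local constancy;
\item $\hat F_m$ is $\varepsilon$-close to $F$, by the triangle inequality.
\end{itemize}
Choosing the appropriate $m\in\{N+1,N+2\}$, we obtain an $\varepsilon$-perturbation of $F$ in $\mathcal R$ whose parity $b_{n_0}$ differs from $b_{n_0}(F)$. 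Hence the set of admissible indices contains $n_0$ and has a least element.

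The only delicate point is this last step. One might try to intersect $\mathcal R$ with $\widehat\R$ and apply Lemma~\ref{lem:set_a_n} to the intersection directly, but that lemma only guarantees membership in $\widehat\R$. The local constancy from Lemma~\ref{lem:parity_stable} is therefore essential for transferring the prescribed value of $a_{n_0}$ to a nearby family in $\mathcal R$.
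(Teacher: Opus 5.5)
Your proposal is correct and follows the paper's argument. Lemma~\ref{lem:set_a_n} supplies an index $n_0$ whose parity can be flipped by an $\varepsilon$-small perturbation inside $\widehat{\mathcal R}$. The local constancy from Lemma~\ref{lem:parity_stable}, together with the density of $\mathcal R$, then moves this to a nearby family in $\mathcal R$, and well-ordering of $\mathbb N$ gives the minimal index. You spell out two details the paper leaves implicit, but they are cosmetic. First, you take consecutive values $m=N+1,N+2$ to guarantee a parity change. Second, you use $\varepsilon/2$ with the triangle inequality where the paper shrinks $\delta$ so that $B_\delta(\tilde F)\subset B_\varepsilon(F)$.
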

\begin{proof}
Lemma~\ref{lem:set_a_n} implies that for a given perturbation size the set of $n$ for which one can find a family $\tilde{F} \in \mathcal{\widehat{R}}$ that is $\varepsilon$-close to $F$ but satisfies $b_n(\tilde{F}) \ne b_n(F)$ is nonempty. Note that since $\tilde F \in \widehat{\mathcal R}$, there exists $\delta>0$ such that $b_n$ is constant in a ball $B_\delta(\tilde F)$. Shrinking $\delta$ if necessary, we may assume that $B_\delta(\tilde F)\subset B_\varepsilon(F)$. Hence, by density of $\mathcal R$, we can replace $\tilde{F}$ by an element of $\mathcal R \cap B_\varepsilon(F)$ with the same~$b_n$. Therefore, the set of $n$ such that there is a family $\tilde{F} \in \mathcal R \cap B_\varepsilon(F)$ with $b_n(\tilde{F}) \ne b_n(F)$ is nonempty and has a minimal element.
\end{proof}
We will always be using Corollary~\ref{cor:set_parity} with $\mathcal{R}$ being the residual set $\widehat{\mathcal{R}}$ of Lemma~\ref{lem:parity_stable}.

\subsection{Mapping binary words into families}
First, we replace $\mathcal R$ by $\mathcal R \cap \widehat{\mathcal{R}}$, where $\widehat{\mathcal{R}}$ comes from Lemma~\ref{lem:parity_stable}.
Then we define a map $\Xi$ from the set of finite binary words to $\mathcal R$ by an inductive procedure. We also inductively define two sequences: the sequence $(\varepsilon_n)_{n\ge 0}$ of allowed perturbation sizes and the sequence of considered indices $(i_n)_{n \ge 0}$. The empty word $\Lambda$ is taken by~$\Xi$ into an arbitrary element $F_\Lambda \in\mathcal R \subset \mathcal{U}_1$. We choose the first element $\varepsilon_0 > 0$ to be so small that a closed $\varepsilon_0$-neighborhood of $F_\Lambda$ is contained in $\mathcal U_1$. The sequence $(i_n)$ is initialized by setting $i_0 = 0$. Assume that
\begin{enumerate}[label=\arabic*)]
  \item the map~$\Xi$ is already defined for all binary words of length up to $n$ and the numbers $\varepsilon_k, i_k$ are defined for $k\le n$ as well;
  \item $\varepsilon_n>0$ is so small that for each word $w$ of length $n$ a closed $\varepsilon_n$-ball centered at $\Xi(w)$ is contained in~$\mathcal{U}_{n+1}$;\footnote{Recall that $\mathcal R = \cap_{n \ge 1}\mathcal U_n$ and all $\mathcal{U}_n$ are open.}
  \item $\varepsilon_n$ is so small that for each word $w$ of length $n$ the parity numbers $b_j$ of the family $\Xi(w)$ cannot be changed by a $2\varepsilon_n$-perturbation of the family for $j = 1,2,\dots, i_n$.
\end{enumerate}

We explain how to define $\Xi$ on the words of length $n+1$.
First, consider the words $\tilde{w}$ of lenth~$n+1$ that end with a zero: $\tilde{w} = w0$, and set $\Xi(\tilde{w}) = \Xi(w)$. Then set $\varepsilon_{n+1} = \varepsilon_n/2$ and $i_{n+1} = i_n + 1$. For each word $\tilde{w} = w1$ of length $n+1$, consider the family $\Xi(w)= F_w \in \mathcal R$. 
By Corollary~\ref{cor:set_parity}, there exists the smallest $j>i_n$ such that $b_j$ can be changed by an $\varepsilon_{n+1}$-perturbation of $F_w$. For future reference, denote this $j$ by $j_w$. Set $c_{w0}$ equal to the parity number $b_j$ of $F_w$ and set $c_{\tilde{w}} = c_{w1} = 1 - b_j$. Choose an $\varepsilon_{n+1}$-perturbed family $F_{\tilde{w}}$ for which the $j$-th parity number equals $c_{\tilde{w}}$ and set $\Xi(\tilde{w}) = F_{\tilde{w}}$. Note that the parity numbers $b_i$, for $i<j_w$, coincide for the families $\Xi(w)$ and $\Xi(\tilde{w})$. If $j>i_{n+1}$, replace $i_{n+1}$ with~$j$. After dealing with all words of length $n+1$, further decrease $\varepsilon_{n+1}$ so that a $2\varepsilon_{n+1}$-neighborhood of any $F_{\tilde{w}}$ is contained in $\mathcal{U}_{n+2}$ and the parity numbers of any $F_{\tilde{w}}$ up to the index $i_{n+1}$ cannot be changed by a $2\varepsilon_{n+1}$-perturbation. Now we have 1)-3) for the words of length $n+1$ and we can proceed with the words of length $n+2$, etc.

\subsection{The limit families}
Consider a binary sequence $\omega \in \{0, 1\}^{\mathbb N}$ viewed as a nested sequence of finite binary words~$w_n$. With a slight abuse of notation, we will write $\omega = \cup_{n=1}^{+\infty} w_n$.
\begin{prop}
 For any binary sequence $\omega = \cup_{n=1}^{+\infty} w_n$, the sequence of families $F_{w_n} = \Xi(w_n)$ converges to some family $F_{\omega} \in \mathcal{R}$.
\end{prop}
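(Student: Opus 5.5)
The argument has two parts. First, the sequence $(F_{w_n})$ is Cauchy in the (complete) space of families. Second, its limit lies in every $\mathcal U_m$.

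For the Cauchy property, look at the construction of $\Xi$. Passing from $w_n$ to $w_{n+1}$, either $\Xi(w_{n+1}) = \Xi(w_n)$ (when the new letter is $0$), or $\Xi(w_{n+1})$ is an $\varepsilon_{n+1}$-perturbation of $\Xi(w_n)$. Here $\varepsilon_{n+1}$ is the value finally fixed at step $n+1$. It was first set to $\varepsilon_n/2$ and then possibly decreased further, and the perturbed family was chosen before the final decrease. So the distance is at most the value $\varepsilon_n/2$ used when choosing the perturbation. In all cases
\[
\dist\bigl(F_{w_{n+1}},F_{w_n}\bigr)\le \varepsilon_n/2 .
\]
Since $\varepsilon_{k+1}\le\varepsilon_k/2$, we get $\varepsilon_k\le \varepsilon_n 2^{-(k-n)}$ for $k\ge n$. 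Summing the geometric tail gives
\[
\dist\bigl(F_{w_k},F_{w_n}\bigr)\le \sum_{j\ge n}\varepsilon_j/2\le \varepsilon_n ,
\]
so the sequence is Cauchy. I assume the space of $C^k$-families of $C^r$-diffeomorphisms carries a complete metric, which is standard (for $r=\infty$ a Fréchet metric). It therefore has a limit $F_\omega$, and $\dist(F_\omega,F_{w_n})\le\varepsilon_n$ for every $n$.

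For membership in $\mathcal R=\bigcap_m\mathcal U_m$: by condition 2), the closed $\varepsilon_n$-ball around $\Xi(w_n)$ lies in $\mathcal U_{n+1}$. The previous paragraph puts $F_\omega$ in this closed ball, so $F_\omega\in\mathcal U_{n+1}$ for every $n$. For $n=0$ we use the choice of $\varepsilon_0$ and the fact that $\Xi(w_0)=F_\Lambda$, which gives $F_\omega\in\mathcal U_1$. Hence $F_\omega\in\bigcap_{m\ge1}\mathcal U_m=\mathcal R$. Recall that $\mathcal R$ was replaced by $\mathcal R\cap\widehat{\mathcal R}$, so the limit is automatically in $\widehat{\mathcal R}$ as well, and its parity numbers $b_j$ are finite and locally constant.

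The only delicate point is the bookkeeping of the shrinking $\varepsilon$'s. The perturbation is chosen with the provisional value $\varepsilon_n/2$, while the inequality $\dist(F_\omega,F_{w_n})\le\varepsilon_n$ must use the final, decreased $\varepsilon_n$. This is fine because the closed-ball condition 2) is imposed on the final $\varepsilon_n$, and all later steps use perturbation sizes at most half of it. It is this closed ball that contains the tail of the sequence and hence the limit.

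I would also record now that, by condition 3), $\dist(F_\omega,F_{w_n})\le\varepsilon_n<2\varepsilon_n$. Consequently $b_j(F_\omega)=b_j(F_{w_n})$ for all $j\le i_n$, which is what will be needed later to distinguish the limits.
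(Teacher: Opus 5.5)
Your proof is correct and follows the paper's argument. The one-step bound $\dist(F_{w_{n+1}},F_{w_n})\le\varepsilon_n/2$ and $\varepsilon_{n+1}\le\varepsilon_n/2$ give a Cauchy sequence whose tail, and hence whose limit, stays in the closed $\varepsilon_n$-ball around $\Xi(w_n)$; this ball lies in $\mathcal U_{n+1}$ by condition 2), so the limit is in $\mathcal R$. Your explicit check that the perturbation is chosen with the provisional value $\varepsilon_n/2$, before $\varepsilon_{n+1}$ is decreased further, spells out bookkeeping the paper leaves implicit.
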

\begin{proof}
The construction implies that $\dist(F_{w_{n+1}}, F_{w_n}) \le \frac{\varepsilon_n}{2} \le \frac{\varepsilon_{n-1}}{4} \le \dots  \le \frac{\varepsilon_0}{2^n}$, hence $(F_{w_n})$ is a Cauchy sequence. All spaces of families that we consider are assumed to be endowed with a metric that makes them complete, so the sequence $F_{w_n}$ converges to some family that we denote~$F_\omega$. A closer look reveals that for $m > n$ we have an estimate
\begin{equation}\label{eq:dist_nm}
  \dist(F_{w_n}, F_{w_m}) \le \frac{1}{2}(\varepsilon_n + \varepsilon_{n+1} + \dots + \varepsilon_{m-1})
  \le \varepsilon_{n}\cdot \left(\frac{1}{2} + \frac{1}{4} + \dots + \frac{1}{2^{m-n}}\right) < \varepsilon_n.
\end{equation}
Recall that $\varepsilon_n$ was chosen so that a closed $\varepsilon_n$-ball centered at $F_{w_n} = \Xi(w_n)$ is contained in~$\mathcal U_{n+1}$. Hence, for each $n$, the tail sequence $(F_{w_j})_{j \ge n}$ is contained in a closed $\varepsilon_n$-ball in $\mathcal{U}_{n+1}$. The limit family $F_\omega$ belongs to all $\mathcal{U}_n$, and hence to $\mathcal{R} = \cap_n \mathcal{U}_n$.
\end{proof}

\begin{prop}\label{prop:1st_dif}
For the limit family $F_\omega$, for any $n \ge 1$ the parity sequence $(b_j)$ coincides with that of $F_{w_j}$ up to the index $i_n$.
\end{prop}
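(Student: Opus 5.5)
The statement is slightly loose in its indexing. I read it as saying that the parities $b_1,\dots,b_{i_n}$ of $F_\omega$ coincide with those of $F_{w_n}$, and hence with those of every $F_{w_m}$ for $m\ge n$. The plan is to combine the distance estimate \eqref{eq:dist_nm} with the stability property 3) of the construction.

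First fix $n\ge 1$ and compare $F_{w_n}$ with $F_\omega$. By \eqref{eq:dist_nm}, for every $m>n$ we have $\dist(F_{w_n},F_{w_m})<\varepsilon_n$, and in fact
\[
\dist(F_{w_n},F_{w_m}) \le \varepsilon_n\bigl(1-2^{-(m-n)}\bigr).
\]
Passing to the limit $m\to\infty$ gives $\dist(F_{w_n},F_\omega)\le\varepsilon_n<2\varepsilon_n$. So $F_\omega$ is a $2\varepsilon_n$-perturbation of $F_{w_n}=\Xi(w_n)$, where $w_n$ is a word of length $n$.

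Property 3) states that no $2\varepsilon_n$-perturbation of $\Xi(w_n)$ can change the parity numbers $b_j$ for $j\le i_n$. Here $\varepsilon_n$ and $i_n$ are the final values, obtained after the last decrease of $\varepsilon_n$ and replacement of $i_n$ at step $n$. Applied to $F_\omega$, this gives $b_j(F_\omega)=b_j(F_{w_n})$ for all $j\le i_n$.

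To compare with $F_{w_m}$ for $m\ge n$, either the same argument applies directly, since $F_{w_m}$ also lies in the $2\varepsilon_n$-ball around $F_{w_n}$, or we note that the parities agree through the chain. In either case the first $i_n$ parities of $F_{w_m}$, $F_{w_n}$ and $F_\omega$ coincide.

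The only delicate point is the meaning of ``cannot be changed by a $2\varepsilon_n$-perturbation''. It must mean that every family within distance $2\varepsilon_n$ of $\Xi(w_n)$ has the same first $i_n$ parities; it is not enough for this to hold only for families in $\mathcal R$. This is exactly how property 3) is imposed, using Lemma~\ref{lem:parity_stable} for families in $\widehat{\mathcal R}$. It is also legitimate to apply it to $F_\omega$, because $b_j(F_\omega)$ is well defined: $F_\omega\in\mathcal R\subset\widehat{\mathcal R}$ by the preceding proposition, so the $a_j$ are finite. We must also check that $\varepsilon_n$ was only ever decreased after $\Xi$ was fixed on length-$n$ words. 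That is indeed the case, so the bound $\dist\le\varepsilon_n$ refers to the same final $\varepsilon_n$ for which 3) holds, and the argument is complete.
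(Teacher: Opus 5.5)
Your argument is correct and is essentially the paper's proof: you use \eqref{eq:dist_nm} to place $F_\omega$ within distance $\varepsilon_n<2\varepsilon_n$ of $F_{w_n}$, and then invoke property 3) with the final values of $\varepsilon_n$ and $i_n$. You also make explicit two points the paper leaves implicit, namely the passage to the limit $m\to\infty$ and the timing of the decrease of $\varepsilon_n$. Your worry that stability only within $\mathcal R$ would not suffice is unnecessary, since $F_\omega\in\mathcal R$ anyway.
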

\begin{proof}
Inequality~\eqref{eq:dist_nm} implies $\dist(F_{w_n}, F_{w_m}) \le \varepsilon_n$, so the choice of $\varepsilon_n$  guarantees that the parity sequence numbers $b_j$ with $j = 1, \dots, i_n$ coincide for $F_{w_n}$ and $F_\omega$.
\end{proof}

\begin{prop}\label{prop:distinct}
For distinct binary sequences $\omega$ and $\tilde{\omega}$, the limit families $F_{\omega}$ and $F_{\tilde\omega}$ belong to different classes of weak equivalence.
\end{prop}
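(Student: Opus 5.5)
The plan is to show that $F_\omega$ and $F_{\tilde\omega}$ have different parity sequences $(b_j)$. Since $(b_j)$ is an invariant of weak topological equivalence, as noted in the sketch of the proof, this gives the claim. A weak equivalence $H$ sends each $f_\theta$ to a conjugate diffeomorphism $g_{h(\theta)}$. Conjugacy sends sources to sources, and it sends the parameter set where the rotation number is $q_n$ to the corresponding set in the other family. I would only remark that the possible orientation reversal of the conjugacies has to be handled by the same convention under which the paper calls $(b_j)$ an invariant; I will not dwell on it.

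First, locate the first disagreement. Let $\omega=\cup w_n$ and $\tilde\omega=\cup\tilde w_n$ be distinct, and let $k$ be the smallest length at which $w_k\neq\tilde w_k$. Then $w_{k-1}=\tilde w_{k-1}=:w$, and after relabeling we have $w_k=w0$ and $\tilde w_k=w1$. By construction, $\Xi(w0)=F_w$, and $\Xi(w1)=F_{w1}$ is an $\varepsilon_k$-perturbation of $F_w$. The index $j_w>i_{k-1}$ is chosen by Corollary~\ref{cor:set_parity} applied with $\mathcal R=\widehat{\mathcal R}$, and we have
\[
b_{j_w}(F_{w0})=c_{w0},\qquad b_{j_w}(F_{w1})=c_{w1}=1-c_{w0}.
\]

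Next, check that the index $j_w$ is frozen at stage $k$. When the words of length $k$ are processed, $i_k$ is replaced by $j$ whenever $j>i_k$, for every word ending in $1$. Hence $i_k\ge j_w$. The final shrinking of $\varepsilon_k$ is made uniformly over all words of length $k$, including both $w0$ and $w1$. It guarantees that for each of $F_{w0}$ and $F_{w1}$, the parity numbers $b_1,\dots,b_{i_k}$ cannot be changed by a $2\varepsilon_k$-perturbation.

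Now apply Proposition~\ref{prop:1st_dif} with $n=k$ to both sequences. It gives $b_{j_w}(F_\omega)=b_{j_w}(F_{w0})=c_{w0}$ and $b_{j_w}(F_{\tilde\omega})=b_{j_w}(F_{w1})=1-c_{w0}$. The parity sequences therefore differ at index $j_w$, so the two limit families are not weakly equivalent.

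The main obstacle is the bookkeeping step: verifying $i_k\ge j_w$ and that the stability radius $\varepsilon_k$ covers both branches $w0$ and $w1$. Both follow directly from the inductive construction, in which $i_{k}$ is updated using all $j_w$ and $\varepsilon_k$ is shrunk simultaneously for all words of length $k$.
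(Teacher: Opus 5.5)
Your proof is correct and is the paper's argument with the bookkeeping spelled out. Both take the first letter where $\omega$ and $\tilde\omega$ differ, note that $\Xi(w0)=F_w$ and $\Xi(w1)$ have opposite parities at the index $j_w\le i_k$, and transfer this to the limit families via Proposition~\ref{prop:1st_dif}. Your caveat about orientation-reversing conjugacies is well taken, since such a conjugacy sends rotation number $q_n$ to $-q_n$. However, it bears only on the invariance of $(b_j)$ under weak equivalence, which the paper also simply asserts, so your argument is no weaker than the paper's.
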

\begin{proof}
Let $n$ be the first index where $\omega = \cup_{n=1}^{+\infty} w_n$ and $\tilde\omega = \cup_{n=1}^{+\infty} \tilde{w}_n$ differ. Then by construction the parity sequences of the families $F_{w_n}$ and $F_{\tilde{w}_n}$ first differ at the index $j_{w_{n-1}} \le i_n$. By Proposition~\ref{prop:1st_dif}, the parity sequences for the limit families $F_{\omega}$ and $F_{\tilde\omega}$ differ at the same index. Since the parity sequence is an invariant of weak equivalence, the two limit families are not weakly equivalent.
\end{proof}
Prorosition~\ref{prop:distinct} finishes the proof of Theorem~\ref{thm:continuum} modulo Lemmas~\ref{lem:set_a_n} and~\ref{lem:parity_stable}.

\subsection{Proof of Lemma~\ref{lem:parity_stable}}
\label{sec:p_lemma}

Recall that for a family $F$ we denote by $a_n$ or $a_n(F)$ the maximum number of sources that can coexist in $F$ for the same parameter value at which the rotation number is~$q_n \in J \cap \mathbb{Q}$.
Let us fix $n$ and prove that there exists an open and dense set $\widehat\U_n$ of families for which $a_n$ is finite and locally constant. Then we will set $\widehat\R = \cap_n \widehat\U_n$.

Let $q_n = \frac{p}{q}$ with $p, q$ coprime. Then all periodic orbits we are interested in will have period~$q$.
By the results of~\cite{B1},\footnote{More precisely, we refer to the sets $\tilde{\mathcal{F}}_{j,l}$ from the proof of Theorem~1 of~\cite{B1} being open and dense. One must note that for each family $(f_\theta)$ of circle diffeomorphisms the locus of periodic points of period less than $q$ is separated from the set $\{\theta\colon \rho(f_\theta)\cdot q \in \mathbb Z)\} \times S^1$ and then take $j = q$ and $l$ sufficiently large.} for any $2 \le r < +\infty$, there is an open and dense set of $C^r$-families of circle diffeomorphisms with the property that non-Morse-Smale diffeomorphisms with periodic points of period $q$ appear at isolated parameter values and display a single nondegenerate saddle-node bifurcation orbit; in particular, all sources in these families are hyperbolic. We have the same property densely in the space of $C^\infty$-families\footnote{The $C^\infty$-density is easy to obtain. By the previous $C^r$-result about openness and density, each $C^\infty$ family $F$ can be $1/r$-approximated in $C^r$ by a $C^\infty$-family $F_r$ with the required property. The sequence $(F_r)$ converges to $F$ in each $C^n$, hence also in $C^\infty$. In fact, the property is also open in~$C^\infty$, see Remark~\ref{rem:nondenfam} below.} and, in particular, in the space of families that we consider.
In what follows, when we refer to \emph{a non-degenerate} one-parameter family, we assume that it has this property.

Consider a non-degenerate one-parameter family.
We can also assume that the endpoints of the parameter segment $I$ correspond to structurally stable circle diffeomorphisms. It is easy to see that for $F$ the value of $a_n$ is finite: otherwise we consider the sequence of parameter values $(\theta_m)$ over which the number of sources goes to infinity, take a subsequence limit $\theta_\infty$ and argue that $f_{\theta_\infty}$ either has infinitely many periodic orbits of period $q$ or finitely many infinitely degenerate periodic orbits, but any of that is a contradiction.

Since all sources in the family $F$ are hyperbolic, it is obvious that $a_n$ cannot decrease in a small neighborhood of $F$: if, say, $f_{\theta_0}$ has $a_n$ sourses, the same is true for a nearby family for the same parameter value, since hyperbolic sources survive small perturbations of the diffeomorphism. Hence, $a_n(F) \le a_n(\tilde F)$ for $\tilde F$ near~$F$.

Assume that $a_n$ can increase in any neighborhood of~$F$. Then there is a sequence of families $F_m \to F$ such that $a_n(F_m) > a_n(F)$. Denote $N = a_n(F)$. Without loss of generality, we can assume that each $F_m$ has at least $N+1$ orbits of hyperbolic sources for some value of~$\theta$ (a priori those orbits are not hyperbolic, but yield hyperbolic orbits after a small perturbation of the diffeomorphism). After replacing $(F_m)_{m\in\mathbb{N}}$ by its subsequence, we find a convergent sequence $\theta_m \to \theta_\infty$ of parameter values such that $F_m$ has at least $N+1$ orbits of hyperbolic sources at parameter value~$\theta_m$.  Choose any $N+1$ such orbits for each $F_m(\theta_m)$ and denote them by $s_1(m), \dots, s_{N+1}(m)$. Note that $F(\theta_\infty)$ either has no parabolic orbits and at most $N$ orbits of hyperbolic sources, or one parabolic orbit and at most $N-1$ orbits of hyperbolic sources. After replacing $(F_m)_{m\in\mathbb{N}}$ with its subsequence again, we may assume that each $s_j(m)$ converges (as a compact set of~$q$ points) to a periodic orbit $s_j$ of the map $F(\theta_\infty)$. Obviously, $s_j$ cannot be an orbit of a hyperbolic sink, so it is either a saddle-node orbit or a source orbit. But each such orbit can yield at most one hyperbolic source after a small perturbation of~$F(\theta_\infty)$. Indeed, if $s_j$ is a hyperbolic source orbit, any perturbation of $F(\theta_\infty)$ has exactly one periodic orbit in a small neighborhood of $s_j$. The same is true for a nondegenerate saddle-node orbit: if a nearby diffeomorphism had two sources near a saddle-node, there would also be another periodic point between the sources for topological reasons, so applying Rolle's theorem twice and using the continuity of the second derivative we get $(f_{\theta_\infty}^{q})''(x) = 0$ at the saddle-node, that is, the saddle-node must be degenerate.
%THE ROLLE THEOREM ARGUMENT SPELLED OUT:
% Let $g_\theta(x)=f_\theta^q(x)-x$. Suppose that a nondegenerate parabolic orbit of $g_{\theta_\infty}$ is approximated by two source orbits of $g_{\theta_m}$. Let $x_m^1<x_m^2$ be two fixed points of $g_{\theta_m}$ corresponding to these sources, and assume $x_m^1,x_m^2\to x_0$, where $x_0$ is the parabolic fixed point of $g_{\theta_\infty}$.

%Since both fixed points are sources, the sign of $g_{\theta_m}$ changes from negative to positive at each of them. Hence there must be at least one further fixed point $y_m\in(x_m^1,x_m^2)$ between them. Applying Rolle's theorem to $g_{\theta_m}$ on the intervals $[x_m^1,y_m]$ and $[y_m,x_m^2]$, we obtain two points $u_m<v_m$ such that $$g_{\theta_m}'(u_m)=g_{\theta_m}'(v_m)=0.$$ Applying Rolle's theorem once more to $g_{\theta_m}'$ on $[u_m,v_m]$, we get a point $w_m\in(u_m,v_m)$ with $$g_{\theta_m}''(w_m)=0.$$ Passing to a subsequence, we may assume $w_m\to x_0$. By continuity of the second derivative, this implies $$g_{\theta_\infty}''(x_0)=0,$$contradicting the nondegeneracy of the parabolic orbit.
So, there is a contradiction: $N+1$ source orbits must accumilate to at most $N$ orbits, but each of those can be accumulated by at most one source orbit. The proof of Lemma~\ref{lem:parity_stable}
is complete.

\medskip

\begin{rem*}
Here is a good place to admit that for $r=1$ Lemma~\ref{lem:parity_stable} does not hold and our approach to proving Theorem~\ref{thm:continuum} does not work. By $C^1$-perturbing a diffeomorphism with a parabolic point, one can get a diffeomorphism with a segment of periodic points. One can further perturb this latter diffeomorphism to encode an infinite sequence of $0$s and $1$s in such a way that it becomes an invariant of topological classification of families. The encoding may take the form of a sequence of hyperbolic periodic attractors (zeros) and repellors (ones) with basins separated by segments of degenerate periodic points. Given two families, one can check if it is true that each of them contains a single diffeomorphism that bares such a mark and, if yes, compare the marks. For any given mark, any open set of $C^1$-families will contain a family with a diffeomorphism bearing this mark (and no other diffeomorphisms with infinitely many periodic orbits), hence any open set will have a continuum of equivalence classes. On the other hand, for a generic $C^1$-family all $a_n(F)$ are equal to $+\infty$: the sets $\mathcal{U}_{n,j}$ of families where there are at least $j$ hyperbolic sources for some map with rotation number $q_n$ are all open and dense in~$C^1$.
\end{rem*}

\subsection{Proof of Lemma~\ref{lem:set_a_n}}
\label{sec:s_lemma}

Recall that Lemma~\ref{lem:set_a_n} says that given a family with nonconstant rotation number, we can set up some $a_{n_0}$ by an $\varepsilon$-small perturbation of the family.

By Lemma~\ref{lem:pert}, we can increase some $a_{n_0}$ by $\varepsilon$-perturbing the family~$F$, but this lemma does not give us control over the actual value of $a_{n_0}$ for the perturbed family~$\widehat{F}$. The idea of the proof is to connect $F$ and $\widehat{F}$ by a smooth homotopy, perturb this homotopy so that it is a two-parameter family with generic properties and then argue that when we change the homotopy parameter the number $a_{n_0}$ changes in increments of~$1$. The properties that we need from the two-parameter family are described in the following lemma, the proof of which is postponed to Section~\ref{sec:transv}. For two-parameter families, we assume that $B = I_s \times I_\theta$, where $I_s = [0,1],\; I_\theta = [-1,1]$, and $s$ is viewed as the homotopy parameter. The parameter space becomes a manifold with corners, but related technical difficulties can be evaded by, say, embedding $I_s, I_\theta$ into circles or arguing that there is an open and dense set of families where the corners correspond to structurally stable circle diffeomorphisms. We write
\[F \colon I_s \times I_\theta \times S^1 \to S^1,\; I_s = [0,1],\; I_\theta = [-1,1], \quad (s, \theta, x) \mapsto F_{(s,\theta)}(x).\]
In the parameter space, we refer to the $s$-axis  as vertical and the $\theta$-axis as horizontal. 

\begin{lem}\label{lem:trans}
In the space of $C^r$-smooth ($1\le r\le \infty$) two-parameter families, there is a dense set of families with the following properties.
\begin{enumerate}
  \item In the parameter space, the subset that corresponds to diffeomorphisms with rotation number $q_n$ with a nonhyperbolic periodic orbit is a union of finitely many $C^2$-smooth regular curves $\gamma_j, \; j = 1,\dots, N,$ and points $c_i, \; i = 1, \dots, M$.
  \item The curves $\gamma_j$ have endpoints either at one of $c_i$ or at the boundary of~$I_s\times I_\theta$, excluding the corners. The curves are transverse to the boundary. The points of the curves, except for~$c_i$, correspond to diffeomorphisms with quadratic saddle-node bifurcations; the points $c_i$ correspond to cusp bifurcations. The points $c_i$ are in the interior of~$I_s\times I_\theta$.
  \item The curves $\gamma_j$ have only transverse intersections. Any point of intersection is in $\operatorname{Int}(I_s\times I_\theta)$ and has exactly 2 preimages in the union of the domains of $\gamma_j.$\footnote{That is, self-intersections are allowed, but 3 pieces of curves cannot intersect at one point.} There are only finitely many points where the curves are tangent to the horizontal $s = const$ direction.
\end{enumerate}
These properties are generic for $r \ge 4$.
\end{lem}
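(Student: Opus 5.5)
The plan is to reduce the statement to a finite-dimensional transversality problem for the displacement function $g_{(s,\theta)}(x) = \widetilde F^q_{(s,\theta)}(x) - x - p$ (with $q_n = p/q$ and $\widetilde F$ a lift), and then apply Thom's jet transversality theorem in the parametric version. First we use density in $C^r$ of $C^\infty$ families, which lets us work with a $C^\infty$ family and approximate in any $C^r$. This also explains why the properties are only claimed dense for small $r$ and generic for $r\ge4$: the cusp and tangency conditions involve up to third derivatives in $x$, plus one more derivative in the parameters for transversality.

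The first step is to localize. Periodic points of period less than $q$ are bounded away from the set where $\rho = q_n$, as in the footnote to the proof of Lemma~\ref{lem:parity_stable}. So the relevant set is
\[
\Sigma=\{(s,\theta,x)\colon g=0,\ \partial_x g=0\}\subset I_s\times I_\theta\times S^1 .
\]
We then use perturbations of the form $F_{(s,\theta)}\circ \phi_{(s,\theta)}$, where $\phi$ is close to the identity and the perturbation parameters enter freely. The map $(s,\theta,x,\text{pert})\mapsto (g,\partial_xg,\partial_x^2g,\partial_x^3 g)$ should be a submersion at points where $g=0$. This holds because perturbing $F$ near one point $x$ of the $q$-orbit changes the jet of $g$ at $x$ arbitrarily, by the chain rule along the orbit, as long as the orbit points are distinct, which is true since the period is exactly $q$.

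The parametric transversality theorem then gives, for a residual set of perturbation parameters, the following conclusions.
\begin{enumerate}
\item $\Sigma$ is a smooth curve in the $3$-manifold, since two equations are imposed in three variables.
\item The cusp set $\{\partial_x^2g=0\}\cap\Sigma$ consists of isolated points, and the condition $\partial_x^3 g\ne0$ holds at them automatically, since codimension $4$ exceeds $3$.
\item $\Sigma$ is transverse to the boundary strata and avoids the corners, by codimension count.
\end{enumerate}
The curves $\gamma_j$ are the projections of $\Sigma$ to the parameter square, quotiented by the $\mathbb Z_q$-action of the orbit. Away from the cusps the projection is an immersion, because $\partial_x^2g\ne0$ lets us solve for $x$ by the implicit function theorem. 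The points $c_i$ are the images of the cusp points, and a standard cusp normal form check shows that $\gamma_j$ ends or turns there.

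For item 3 we apply multijet transversality to pairs of orbits. Two distinct parabolic orbits over the same parameter give codimension $4$ in a $4$-dimensional space, so they occur at isolated points. The transversality of the multijet map there is exactly the transversality of the two branches of $\gamma$. Triple points have codimension $6$ in $5$ dimensions and are therefore absent. Horizontal tangency means that $\partial_\theta g$, as an extra condition, vanishes on $\Sigma$. This is one more equation on a curve, so generically it holds at finitely many points, and genericity gives finiteness via compactness. Compactness of $\Sigma$, together with the non-accumulation obtained from nondegeneracy, gives finiteness of the number of curves and points.

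The main obstacle is item 3 near the cusp points and near the boundary. There a pair of parabolic orbits may degenerate, with the two orbits colliding, so the multijet argument on the complement of the diagonal needs a blow-up or a separate local normal form analysis. I would first treat a neighborhood of each $c_i$ using the versal cusp unfolding, where the bifurcation curve is a semicubical cusp with no self-intersections nearby. I would then apply multijet transversality only outside a small neighborhood of the diagonal and of the cusp points.
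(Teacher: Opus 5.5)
Your argument is correct in outline. It rests on the same mechanism as the paper: jet transversality for the $q$-th iterate, which works because perturbing $F$ near one point of a $q$-orbit controls the jet of the iterate there (this is what Proposition~\ref{prop:gen_iter} encodes). The implementation, however, is genuinely different.

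The paper proceeds in four steps:
\begin{enumerate}
\item it first proves that $q$-periodic points are generically isolated;
\item it covers $P$ by products $B_i\times V_i$ in which each orbit has one representative in $V_{i,0}$;
\item it applies the function-space multijet transversality theorem to $F^q|_{B_i\times V_{i,0}}$ for a list of stratified submanifolds, and pulls the residual sets back to families via the local surjectivity of $F\mapsto F^q|_{U\times V_0}$;
\item it selects $\rho=q_n$ by a connected-components argument.
\end{enumerate}
You instead work globally with the displacement function $g=\widetilde F^q-x-p$ of a lift. This selects the rotation number directly and makes the isolation step unnecessary. You handle the $\mathbb Z_q$-symmetry by quotienting and by restricting the pair condition to distinct orbits, and you use finite-dimensional parametric transversality.

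Your route buys economy. At a double point the pair Jacobian is $\pm g_{xx}(x_1)g_{xx}(x_2)\bigl(g_s(x_1)g_\theta(x_2)-g_\theta(x_1)g_s(x_2)\bigr)$, which is the expression in \eqref{eq:projection}. So transversality to $\Sigma^{(2)}_{\mathrm{par}}$ already contains the paper's separate conditions $\Sigma^{(2)}_{\mathrm{deg,par}}$ and $\Sigma^{(2)}_{\mathrm{par,tan}}$. Likewise, the Jacobian of $(g,g_x,g_{xx})$ at a cusp point is $g_{xxx}\,(g_sg_{x\theta}-g_\theta g_{xs})$. Hence both cusp conditions, including \eqref{eq:cusp_cond}, follow from the codimension-$3$ transversality you already have.

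The paper's route gives residuality in each $C^r$, $r\ge4$, straight from a black-box theorem, and the same machinery is reused in Remarks~\ref{rem:isolated} and~\ref{rem:nondenfam}. Your parametric argument gives density. To conclude genericity you still need to note that the conditions are stable under $C^4$-small perturbations.

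A few points to tighten.

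\textbf{Perturbations in the parameters.} The submersion onto $(g,g_x,g_{xx},g_{xxx})$ does not move $g_\theta$. For the horizontal-tangency condition $g=g_x=g_\theta=0$ you therefore really need the $(s,\theta)$-dependent perturbations you allow. The finite-dimensional family must be assembled, by compactness, from bumps small enough that a bump placed near one orbit point is not met by the rest of that orbit, nor by the other orbit of a pair.

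\textbf{Your ``main obstacle''.} It needs neither blow-ups nor the versal cusp unfolding. If $x_1<x_2$ are distinct parabolic points of the same $g_\beta$, three applications of Rolle's theorem give a zero of $g_{xxx}$ between them. So parabolic pairs accumulating on the diagonal would produce, in the limit, a point with $g=g_x=g_{xx}=g_{xxx}=0$, which you have already excluded. The same holds for pairs accumulating on same-orbit pairs, after applying $F_\beta^{-j}$. This gives a uniform separation, the pair argument runs on a compact domain, and the double points are finite. The paper leaves this point implicit.

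\textbf{The boundary.} Transversality of $\Sigma$ to the face $\{s=0\}\times I_\theta\times S^1$ means $g_\theta g_{xx}\ne0$ there. This is exactly transversality of the projected curve to the edge. Since $g_{xx}=0$ at a cusp point, it also keeps the $c_i$ off that edge; analogously, $g_sg_{xx}\ne0$ on the faces $\theta=\pm1$. Double points on the edges are excluded by counting $4$ conditions on the $3$-dimensional domain $(\theta,x_1,x_2)$, as in the paper's edge argument.
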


\begin{proof}[Proof of Lemma~\ref{lem:set_a_n}]
Consider a one-parameter family as in the statement of Lemma~\ref{lem:set_a_n}. Approximate it, via an $\varepsilon/4$-perturbation, by a family from the residual set $\widehat{\mathcal{R}}$ of Lemma~\ref{lem:parity_stable}.  Replace the original~$F$ by the perturbed one and set $N = a_{n_0}(F)$. Apply Lemma~\ref{lem:pert} to obtain a new perturbed family $\widehat{F}$ such that the number $a_{n_0}(\widehat{F})$ is larger than $m > N$. Obviously, one can take $\widehat{F} \in \widehat{\mathcal R}$. Connect $F$ and $\widehat{F}$ with a smooth homotopy $F_{(s,\theta)}$ such that $F_{(0,\cdot)} = F$, $F_{(1,\cdot)} = \widehat{F}$ and any $F_{(s,\cdot)}$ is $\varepsilon/2$-close to the original $F$ in the relevant space of one-parameter families. Perturb the homotopy map to obtain a $C^\infty$-smooth two-parameter family as in Lemma~\ref{lem:trans}. Assume that after this perturbation $a_{n_0}(F_{(0,\cdot)})$ and $a_{n_0}(F_{(1,\cdot)})$ do not change and the intermediate families $F_{(s,\cdot)}$ remain $\varepsilon/2$-close to the original~$F$.

Consider the curves $\gamma_j$ and the cusp-bifurcation parameter points $c_i$ in the parameter space $I_s \times I_\theta$. By Lemma~\ref{lem:trans}, there is a finite set  $\Omega = \{h_i\}$ of points where $\gamma_j$ have horizontal tangent lines, a finite set~$X$ where the curves $\gamma_j$ intersect, and the set of cusp bifurcation parameter values ${\mathcal V} = \{c_i\}_{i = 1}^M$ is also finite. Also consider the finite set $Y$ of transverse intersections between the curves $\gamma_j$ and the boundary $\partial B$. If there is a section $\{(s, \theta)\colon s = s_0\}$ that contains more than one point of $\Omega\cup X \cup {\mathcal V} \cup Y$, we can make a coordinate change in the parameter space, arbitrarily close to the identity, after which this is not the case. Such a coordinate change corresponds to a small perturbation of the family.

The complement $A$ to the set $\bigcup_j\gamma_j \cup {\mathcal V}$ in the parameter space is an open set in the topology of the parameter rectangle. Consider a connected component $U$ of this set. Either for $U$ every corresponding diffeomorphism has rotation number different from $q_n$ or all diffeomorphisms have rotation number $q_n$, are structurally stable and have the same number of sources. Indeed, $U$ is pathwise-connected, so if it contained points that correspond to rotation number $q_n$ and some other rotation number, one would connect these points by a curve inside $U$ and find a point on this curve where the rotation number is $q_n$ and a nonhyperbolic $q$-periodic orbit is present.

Consider a horizontal section $\{s = const\}$ that contains no points of~$\Omega \cup X \cup {\mathcal V} \cup Y$. Then any section nearby corresponds to a family $F_{(s,\cdot)}$ with the same value of~$a_{n_0}$: this section intersects the same connected components of~$A$.

Now consider a section $\{s = s_0\}$ that contains exactly one point of~$\Omega \cup X \cup {\mathcal V} \cup Y$. When the horizontal section crosses such a point, the value of $a_{n_0}$ can change by at most~1. Indeed, for a point in $\Omega$, $Y$, or $\mathcal{V}$, its small neighborhood intersects two connected components of $A$ and the number of sources in these differs by at most~1. In the case of a point $p \in X$, the two intersecting curves $\gamma_j$ (or the pieces of one self-intersecting curve) split a neighborhood of~$p$ into 4 regions. Since at least one of the intersecting curves $\gamma_j$ is transverse to $\{s = s_0\}$ at~$p$, for small $\delta > 0$ the sections $\{s = s_0\pm\delta\}$ intersect at least 2 adjacent regions of 4. On the other hand, the number of sources changes by at most 1 between the adjacent regions. Hence, for the sections $\{s = s_0+\delta\}$ the number $a_{n_0}$ can change by at most 1 compared to the sections $\{s = s_0-\delta\}$. But then for any integer $m$ between $N$ and $a_{n_0}(\widehat{F})$ we can find a value of $s$ such that $a_{n_0}(F_{(s,\cdot)})=m$. The proof of Lemma~\ref{lem:set_a_n} modulo Lemma~\ref{lem:trans} is complete.
\end{proof}

\section{Proof of Lemma~\ref{lem:trans}}\label{sec:transv}

\subsection{Multijet transversality}

The proof of Lemma~\ref{lem:trans} can be reduced to a chain of informal arguments like this: ``For a circle diffeomorphism, to have simultaneously a cusp bifurcation periodic orbit and another parabolic periodic orbit is a degeneracy of codimension 3, so such maps do not appear in generic 2-parameter families''. Such arguments are formalized with the help of the multijet transversality theorem, the $C^\infty$-version of which is attributed to C. Morlet in the work of J.~N.~Mather~\cite{Ma}. G.~K.~Francis explained in~\cite{F} why the finitely-smooth version of this theorem holds, and P.W. Michors book~\cite{Mi} contains a version for manifolds with corners.

For an integer $k\ge 0$ and smooth manifolds $X,Y$, denote by $J^k(X,Y)$ the bundle of $k$-jets of maps in $C^k(X,Y)$. For $m\ge 1$, the space of $m$-multi-$k$-jets consists of tuples in $(J^k(X,\,Y))^m$ with elements that have pairwise distinct projections to~$X$. We denote this space ${}_mJ^k(X,Y)$.
Every map $F\in C^r(X, Y)$ has its $m$-multijet extension
\[
{}_mj^r F:
(x_1, \dots, x_m) \mapsto (j^r_{x_1}F, \dots, j^r_{x_m}F),
\]
where $x_j$ are assumed to be pairwise distinct.

\begin{theorem*} (Multijet transversality,~\cite{Ma,Mi,F}). 
Let $W$ be a submanifold in the space of $m$-multi-$k$-jets ${}_mJ^k(X,Y)$ of codimension $c$ and 
$r - k > \max(0, mn - c)$, where $n = \dim X$ and $\dim Y < \infty$. 
Then the set of $f \in C^r(X,Y)$ for which ${}_mj^k f \pitchfork W$ is residual.
\end{theorem*}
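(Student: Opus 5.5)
The plan is the classical Thom--Mather scheme: local perturbation by polynomial families, then the parametric transversality theorem (Sard), then a Baire category argument. Write $X^{(m)}$ for the set of $m$-tuples of pairwise distinct points of $X$. This is an open submanifold of $X^m$ of dimension $mn$, and ${}_mj^k f$ is a $C^{r-k}$ map $X^{(m)}\to {}_mJ^k(X,Y)$.

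\emph{Localisation.} Cover $X^{(m)}$ by countably many compact product sets $K=K_1\times\dots\times K_m$ with the following properties: each $K_i$ lies in a coordinate chart of $X$, the $K_i$ are pairwise disjoint, and each $f(K_i)$ will later be taken inside a chart of $Y$. Similarly, exhaust $W$ by countably many compact pieces $L$, each of which sits in a slightly larger open piece of $W$ that is closed in a neighbourhood of $L$. Define $\mathcal T_{K,L}$ as the set of $f$ such that ${}_mj^kf\pitchfork W$ at every point of $K\cap ({}_mj^kf)^{-1}(L)$. Then the set of $f$ with ${}_mj^kf\pitchfork W$ is exactly $\bigcap_{K,L}\mathcal T_{K,L}$, a countable intersection. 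So it suffices to show that each $\mathcal T_{K,L}$ is open and dense in $C^r(X,Y)$.

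\emph{Openness.} Transversality at a point is an open condition on the $1$-jet of ${}_mj^kf$, that is, on the $(k+1)$-jet of $f$, and $r\ge k+1$ by hypothesis. Suppose $f_\nu\to f$ in $C^r$ and $f_\nu$ fails transversality at points of $K$ mapped into $L$. By compactness of $K$ and $L$ we can extract a limit point. At this limit point $f$ itself fails transversality: the non-transverse configurations form a closed condition, since $W$ is a submanifold that is closed near $L$. This contradicts $f\in\mathcal T_{K,L}$.

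\emph{Density.} Fix $f$ and $K$. Choose bump functions $\phi_i$ equal to $1$ near $K_i$ and with pairwise disjoint supports; this is possible because the $K_i$ are pairwise disjoint. Consider the finite-dimensional family
\[
f_a(x)=f(x)+\sum_{i=1}^m \phi_i(x)\,P_{a_i}(x),
\]
where $P_{a_i}$ runs over all $\mathbb R^{\dim Y}$-valued polynomials of degree $\le k$ in the local coordinates, and $a=(a_1,\dots,a_m)\in A=\mathbb R^{D}$ is small. On $K$ the map $\Phi(a,x)={}_mj^kf_a(x)$ is a submersion onto the jet fibres. Indeed, at distinct points the $k$-jets of the summands can be prescribed independently, and adding the polynomial part $a_i$ moves the $k$-jet at $x_i$ through the whole fibre. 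Hence $\Phi\pitchfork W$, so $Z=\Phi^{-1}(W)$ is a $C^{r-k}$ submanifold of dimension $D+mn-c$.

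By the parametric transversality theorem, ${}_mj^kf_a\pitchfork W$ on $K$ whenever $a$ is a regular value of the projection $\pi\colon Z\to A$. Sard's theorem applies to $\pi$ provided its smoothness exceeds $\max(0,\dim Z-\dim A)=\max(0,mn-c)$. This is exactly the hypothesis $r-k>\max(0,mn-c)$. Therefore almost every small $a$ works, and $f_a\to f$ in $C^r$ as $a\to 0$, which gives density. In the non-compact or manifold-with-corners setting I would use the Whitney (strong) topology and Michor's version of the same construction.

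\emph{Main obstacle.} I expect the delicate step to be the finite-smoothness bookkeeping. The constraint $r-k>\max(0,mn-c)$ enters only through Sard's theorem applied to $\pi$, and it has to be checked that $\Phi$, and hence $Z$ and $\pi$, really are $C^{r-k}$. The secondary issue is the non-compactness of $X^{(m)}$ near the diagonal. This is why the argument uses compact product sets with disjoint factors: the disjointness is what makes the bump-function perturbations independent, and the countable exhaustion keeps everything within Baire category.
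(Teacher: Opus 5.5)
The paper gives no proof of this statement. It quotes the theorem from Mather (who attributes the $C^\infty$ case to Morlet), from Francis for finite smoothness, and from Michor for manifolds with corners. Your proposal is the standard argument behind those references and is correct: you localise to compact products of pairwise disjoint sets, add polynomials cut off by disjointly supported bumps so that $(a,x)\mapsto {}_mj^kf_a(x)$ becomes a submersion, apply parametric transversality, and use Sard's theorem for the $C^{r-k}$ projection $Z\to A$, whose smoothness threshold is exactly $r-k>\max(0,mn-c)$.

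The one point to tighten is the target chart. Since $K$ is fixed before $f$, you cannot assume $f(K_i)$ lies in a chart of $Y$. There are two standard fixes:
\begin{enumerate}
\item Choose the pieces $L$ so that their projection to $Y^m$ lies in a product of charts, and support each $\phi_i$ inside the preimage under $f$ of the corresponding chart.
\item Subdivide $K$, depending on $f$, into finitely many products on which the chart condition holds. Each resulting set $\mathcal T_{K^{(j)},L}$ is then open and dense in a whole neighbourhood of $f$, so their finite intersection $\mathcal T_{K,L}$ is dense there.
\end{enumerate}
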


The application of this theorem is complicated by the fact that we are working with iterations of a mapping, therefore we first introduce the relevant submanifolds in the space if multijets and deal with the case of fixed parabolic points in Section~\ref{sec:submanifolds} and then explain in Section~\ref{sec:fix_to_per} how the iterates of the family inherit the transversality properties.

\subsection{Submanifolds in the space of multijets}
\label{sec:submanifolds}

Let $B=I_s\times I_\theta$ be the parameter rectangle. We will denote its elements by $\beta = (s, \theta)$ for brevity. Let $U\subset B$ be a closed ball, $A\subset\mathbb S^1$ be a finite disjoint union of segments, and $E \subset S^1$ be a finite union of intervals such that $A\cap E\neq\varnothing$. Denote
\[
\mathfrak C^r \;:=\; C^r\big(U\times A,\; E\big).
\]
In the definitions below we can assume that $r \ge 3$, but in the relevant arguments we will need the assumption $r \ge 4$.
For a map $F\in\mathfrak C^r$, its $m$-multi-$k$-jet ($k \le r$) extension has the form
\[
{}_mj^k F:
((\beta, x_1), \dots, (\beta_m,x_m)) \mapsto (j^k_{(\beta_1,x_1)}F, \dots, j^k_{(\beta_m,x_m)}F),
\]
where $(\beta_j, x_j)$ are distinct.

The dimensions of the domain and the relevant (multi)jet spaces are as follows.
Since $\dim(U\times A)=3$, the dimension of the $k$-jet bundle for essentially scalar-valued
maps is
\[
\dim J^k(U\times A,\,E)
= \dim(U\times A) + \binom{\dim(U\times A)+k}{k}.
\]
Hence
\[
\dim J^1(U\times A,\,E) = 7,  \quad
\dim J^2(U\times A,\,E) = 13, \quad
\dim J^3(U\times A,\,E) = 23.
\]
For multijet spaces we have
\[
\dim {}_mJ^k(U\times A,\,E)
= m \cdot \dim J^k(U\times A,\,E),
\]
while the domain of the $m$-multijet extension has dimension $m \cdot \dim(U\times A) = 3m$.

\medskip
Now we define the submanifolds of the jet and multijet spaces that we need to consider to prove Lemma~\ref{lem:trans}. The partial derivatives of different order will be denoted by $\partial_x F, \; \partial_{xx} F, \; \partial_{x\theta} F,$ etc.

\begin{enumerate}
  \item The submanifold $\Sigma_{\mathrm{par}}$ of \textbf{parabolic fixed points}\footnote{Of course, $\Sigma_{\mathrm{par}}$ contains the $1$-jets of $F$ at parabolic fixed points rather than the points themselves, but we hope that the reader would find this language abuse appropriate.} is 
  a subset of $J^1(U\times A,\,E)$ defined as
  \[
  \Sigma_{\mathrm{par}}
  \;=\;
  \{\,j^1_{(\beta,x)}F:\;
  F_\beta(x)-x=0,\; \partial_xF_\beta(x)=1\,\}.
  \]
  This is a codimension 2 submanifold in the space of 1-jets that correspond to fixed points of $F_\beta(\cdot)$ with multiplier~$1$.
  
  \item The set \[
  \Sigma_{\mathrm{par}}^{(2)} =
  \Big\{\left(j^1_{(\beta,x_1)}F,\,j^1_{(\beta,x_2)}F\right):\ 
    F_\beta(x_i)=x_i,\ \partial_xF_\beta(x_i)=1,\ i=1,2;\ x_1 \ne x_2
  \Big\}
  \]
  is the submanifold of \textbf{two coexisting parabolic fixed points}. That is, we consider the case when two parabolic fixed points occur simultaneously for the same parameter~$\beta$. This submanifold has codimension~$6$ in ${}_2J^1(U\times A,\,E)$: the condition $\beta = \beta_1 = \beta_2$ that adds 2 to the codimension is hidden in the notation. 
  
  \item The set $\Sigma_{\mathrm{par}}^{(3)}$ of  \textbf{three coexisting parabolic fixed points} is defined analogously to $\Sigma_{\mathrm{par}}^{(2)}$ and has codimension $10$ in ${}_3J^1(U\times A,\,E)$.
  
  \item The submanifold of \textbf{degenerate parabolic fixed points}
  \[
  \Sigma_{\mathrm{deg}}
  \;=\;
  \{\,j^2_{(\beta,x)}F:\; F_\beta(x)=x,\; \partial_xF_\beta(x)=1,\; \partial_{xx}F_\beta(x)=0\,\}.
  \]
  is a subset of \(J^2(U\times A,\,E)\) of codimension $3$. It contains an open subset $\Sigma_{\mathrm{cusp}}$ of cusp-bifurcation fixed points distinguished (see~\cite{K}) by conditions $\partial_{xxx}F_\beta(x)\ne0$ and
  \begin{equation}\label{eq:cusp_cond}
  \left(\partial_sF\cdot\partial_{x\theta}F - \partial_\theta F\cdot\partial_{xs}F\right)(\beta, x) \ne 0.
  \end{equation}
  We define the sets $\Sigma_{\mathrm{deg, 1}}, \; \Sigma_{\mathrm{deg, 2}} \subset J^3(U\times A,\,E)$ where these conditions are violated, in order to argue later that such fixed points are irrelevant:
  \[\Sigma_{\mathrm{deg, 1}} = \{\,j^3_{(\beta,x)}F \colon j^2_{(\beta,x)}F \in \Sigma_{\mathrm{deg}}, \, \partial_{xxx}F_\beta(x)=0\};\]
  \[\Sigma_{\mathrm{deg, 2}} = \{\,j^3_{(\beta,x)}F \colon j^2_{(\beta,x)}F \in \Sigma_{\mathrm{deg}}, \, \left(\partial_sF\cdot\partial_{x\theta}F - \partial_\theta F\cdot\partial_{xs}F\right)(\beta, x) = 0\}.\]
  Both have codimension 4 in $J^3(U\times A,\,E)$. More precisely, $\Sigma_{\mathrm{deg, 2}}$ has a singular locus 
  \[
  \{\partial_sF(\beta, x) = \partial_{x\theta}F(\beta, x) = \partial_\theta F(\beta, x) = \partial_{xs}F(\beta, x) = 0\}
  \]
  and should be viewed as a stratified submanifold with 4 being the smallest codimension of stratum.

  \item The \textbf{submanifold of coexisting degenerate and arbitrary parabolic fixed points}
  \[\Sigma^{(2)}_{\mathrm{deg, par}} = 
  \Big\{\left(j^2_{(\beta_1,x_1)}F,\,j^2_{(\beta_2,x_2)}F\right) \colon \beta_1 = \beta_2,\, 
    j^2_{(\beta_1,x_1)}F \in \Sigma_{\mathrm{deg}},\, j^1_{(\beta_2,x_2)}F \in \Sigma_{\mathrm{par}}
  \Big\}
  \]
  has codimension 7 in ${}_2J^{2}(U \times A,\,E)$.
  
  \item The \textbf{submanifold related to two coexisting parabolic points that yield a nontransverse intersection in the bifurcation diagram}
  \[\Sigma^{(2)}_{\mathrm{par, tan}} = 
  \Big\{\left(j^1_{(\beta,x_1)}F,\,j^1_{(\beta,x_2)}F\right) \in \Sigma_{\mathrm{par}}^{(2)}: \partial_\theta F_\beta(x_1)\partial_sF_\beta(x_2) - \partial_\theta F_\beta(x_2)\partial_sF_\beta(x_1) = 0 
  \Big\}\]
  is a stratified manifold of minimal codimension 7 in ${}_2J^1(U\times A,\,E)$.
  \item The \textbf{submanifold related to `horizontal' tangent lines in the bifurcation diagram}
  \[\Sigma_{\mathrm{par,hor}} = 
  \Big\{j^1_{(\beta,x)}F \in \Sigma_{\mathrm{par}} \colon\; \partial_{\theta}F_\beta(x) = 0 
  \Big\}\]
  is a submanifold of codimension 3 in $J^1(U\times A,\,E)$. This submanifold contains the jets of $F$ at all non-degenerate parabolic fixed points $(\beta, x)$ such that the tangent vector at $\beta$ to the line of parabolic fixed points satisfies $ds = 0$. It may also contain the jets at some degenerate parabolic points, but we do not care. Also, at some point in the argument we will need an analogously defined submanifold related to vertical tangent lines:
  \[\Sigma_{\mathrm{par,vert}} = 
  \Big\{j^1_{(\beta,x)}F \in \Sigma_{\mathrm{par}} \colon\; \partial_{s}F_\beta(x) = 0 
  \Big\}.\]
\end{enumerate}
\begin{rem}
    Condition~\eqref{eq:cusp_cond} appears when checking, via the implicit function theorem, that the preimage $W_{\mathrm{par}}$ of $\Sigma_{\mathrm{par}}$ in $I_s\times I_\theta \times A$ under the jet-extension of $F$ is a regular curve near a cusp fixed point. The cusp itself appears when we project this preimage to $B = I_s\times I_\theta$.
\end{rem}

\begin{rem}
    The condition in the definition of $\Sigma^{(2)}_{\mathrm{par, tan}}$ is obtained in the following way. We project the preimage $W_{\mathrm{par}}$ to $I_s\times I_\theta$ and consider a point where the projection intersects itself (if there are any). Let this intersection correspond to points $x_1, x_2 \in A$ and parameter value $\beta = (s,\theta)$. Then the intersection is transverse iff\footnote{If a curve in $\mathbb R^3$ is given by a system of equations $H=0$ with $H = (H_1, H_2)$, its tangent vector $v(q)$ at a point $q$ is given by $v(q) = \nabla H_1(q) \times \nabla H_2(q)$. In our case, $F_\beta(x)-x$ plays the role of $H_1$ and $\partial_xF_\beta(x)-1$ is $H_2$. We calculate the tangent vectors $v(q)$ at $q = (\beta, x_1)$ and $q = (\beta, x_2)$ and write the transversality condition
    \[\det
    \begin{pmatrix}
    ds(v(\beta, x_1))& d\theta(v(\beta, x_1))\\
    ds(v(\beta, x_2))& d\theta(v(\beta, x_2))
    \end{pmatrix}\ne 0.\]}
\begin{equation}\label{eq:projection}
      \partial_{xx}F_\beta(x_1)\cdot\partial_{xx}F_\beta(x_2)\cdot\left(\partial_\theta F_\beta(x_2)\partial_sF_\beta(x_1) - \partial_\theta F_\beta(x_1)\partial_sF_\beta(x_2)\right) \ne 0.
\end{equation}
    Since we already have $\Sigma_{\mathrm{deg, par}}$ to exclude degenerate parabolic points coexisting with other parabolic points, we use only the 3rd factor of~\eqref{eq:projection} in the definition of~$\Sigma^{(2)}_{\mathrm{par,tan}}$.
    
Likewise, the condition that the projection of $W_{\mathrm {par}}$ to the parameter space is not tangent to $ds = 0$ direction at $\beta$ can be written as $ds(v(\beta, x)) \ne 0$, or, equivalently, $\partial_{xx} F_\beta(x)\cdot \partial_\theta F_\beta(x) \ne 0,$ where $(\beta, x) \in W_{\mathrm{par}}$. 
\end{rem}

\medskip

\begin{prop}[Generic multijet transversality for the listed degeneracies]\label{prop:multijet-transversality}
Let \(r\ge 4\) and let \(\mathfrak C^r=C^r(U\times A,\,E)\). Then a topologically generic $F \in \mathfrak{C}^r$ has (multi)jet extensions transverse to the (stratified) submanifolds 
\[
\Sigma_{\mathrm{par}},\ 
\Sigma_{\mathrm{par}}^{(2)},\ 
\Sigma_{\mathrm{par}}^{(3)},\ 
\Sigma_{\mathrm{deg}},\ 
\Sigma_{\mathrm{deg,1}},\
\Sigma_{\mathrm{deg,2}},\ 
\Sigma_{\mathrm{deg,par}}^{(2)},\
\Sigma_{\mathrm{par,tan}}^{(2)},\
\Sigma_{\mathrm{par,hor}}.
\]
In particular, the jet extensions of a generic $F$ have no intersections with
\[
\Sigma_{\mathrm{par}}^{(3)},\  
\Sigma_{\mathrm{deg,1}},\
\Sigma_{\mathrm{deg,2}},\ 
\Sigma_{\mathrm{deg,par}}^{(2)},\
\Sigma_{\mathrm{par,tan}}^{(2)}
\]
and intersect
$
\Sigma_{\mathrm{par}}^{(2)},\
\Sigma_{\mathrm{deg}},\
\Sigma_{\mathrm{par, hor}}
$
at isolated points.
\end{prop}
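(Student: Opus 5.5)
The plan is to apply the multijet transversality theorem stated above separately to each of the listed (stratified) submanifolds, and then intersect the resulting residual sets; a countable intersection of residual sets in the Baire space $\mathfrak C^r$ is residual. For each submanifold $W\subset {}_mJ^k(U\times A,E)$ we need two checks. First, $W$ must be a genuine (stratified) submanifold of the stated codimension $c$; for stratified ones we apply the theorem to each stratum, of which there are finitely many. Second, the inequality $r-k>\max(0,\,3m-c)$ must hold, since the domain of the $m$-multijet extension has dimension $3m$.

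The codimension checks are direct. Each defining equation involves a distinct jet coordinate, namely the value, $\partial_x$, $\partial_{xx}$ or $\partial_{xxx}$. The coincidence condition $\beta_1=\beta_2$ (resp.\ $\beta_1=\beta_2=\beta_3$) is an independent constraint on base coordinates, adding $2$ (resp.\ $4$). The tangency conditions in $\Sigma_{\mathrm{deg,2}}$ and $\Sigma^{(2)}_{\mathrm{par,tan}}$ are quadratic polynomials in the independent coordinates $\partial_sF,\partial_\theta F,\partial_{x\theta}F,\partial_{xs}F$ (resp.\ $\partial_sF,\partial_\theta F$ at two points). They are submersive away from the locus where all the involved derivatives vanish, and that locus has higher codimension; this gives the stratification with minimal codimension as claimed. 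The numerology is then as follows. For $\Sigma_{\mathrm{par}},\Sigma_{\mathrm{par,hor}}$ we have $k=1,m=1,c\ge 2$, so we need $r-1>1$. For $\Sigma^{(2)}_{\mathrm{par}}$ ($c=6$) and $\Sigma^{(2)}_{\mathrm{par,tan}}$ ($c=7$) we have $m=2$, $k=1$, so we need $r>1$. For $\Sigma^{(3)}_{\mathrm{par}}$ we have $3m=9<10$. For $\Sigma_{\mathrm{deg}}$ we have $k=2,c=3$, so we need $r>2$. For $\Sigma_{\mathrm{deg,1}},\Sigma_{\mathrm{deg,2}}$ we have $k=3,c=4>3$, so we need $r>3$, which is where $r\ge4$ enters. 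For $\Sigma^{(2)}_{\mathrm{deg,par}}$ we have $k=2,m=2,c=7>6$, so we need $r>2$. Everything is thus covered by $r\ge 4$.

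The ``in particular'' part follows from the dimension count. Transversality of ${}_mj^kF$ to $W$ means the preimage is a submanifold of codimension $c$ in the $3m$-dimensional domain, so it is empty when $c>3m$. This gives emptiness for $\Sigma^{(3)}_{\mathrm{par}}$ ($10>9$), $\Sigma_{\mathrm{deg,1}},\Sigma_{\mathrm{deg,2}}$ ($4>3$, stratum by stratum), and $\Sigma^{(2)}_{\mathrm{deg,par}},\Sigma^{(2)}_{\mathrm{par,tan}}$ ($7>6$). When $c=3m$ the preimage is $0$-dimensional, hence discrete, giving isolated points for $\Sigma_{\mathrm{deg}},\Sigma_{\mathrm{par,hor}}$ ($3=3$) and $\Sigma^{(2)}_{\mathrm{par}}$ ($6=6$). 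For the multijet cases, isolatedness in the open configuration space of distinct points must be upgraded to finiteness on compact pieces. Accumulation on the diagonal $x_1=x_2$ would force a degenerate parabolic point, because by Rolle $\partial_xF_\beta-1$ has a zero between $x_1$ and $x_2$ and $\partial_{xx}F_\beta=0$ in the limit. A point of $\Sigma^{(2)}_{\mathrm{deg,par}}$-type or of $\Sigma_{\mathrm{deg}}$ at the limit is then controlled by the single-jet results. Only local finiteness is claimed here anyway.

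The main obstacle I expect is justifying that the theorem applies in this setting. The domain $U\times A$ is a manifold with corners (we rely on Michor's version for this), and the target $E$ is an open subset of the circle. In addition, transversality must be obtained for stratified sets rather than smooth submanifolds. I would handle the latter by applying the theorem to each of the finitely many smooth strata, which are semialgebraic in jet coordinates, and intersecting the residual sets. Openness is not needed, since only genericity (residuality) is asserted.
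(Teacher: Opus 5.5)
Your proposal is correct and follows the paper's proof. Like the paper, you apply the multijet transversality theorem stratum by stratum to each listed set, read off emptiness from $c>3m$ and isolatedness from $c=3m$, and intersect the finitely many residual sets. Your case-by-case codimension and smoothness bookkeeping is more explicit than the paper's single worked example. In particular, your bound $r>2$ for $\Sigma^{(2)}_{\mathrm{deg,par}}$ is what the stated inequality $r-k>\max(0,mn-c)$ actually gives, where the paper writes $3<r$.

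The paragraph on accumulation at the diagonal goes beyond the statement, which only asserts isolatedness. If you want to keep it, the clean argument is this: two nearby parabolic points of the same map force, by three applications of Rolle's theorem, a limit $3$-jet in $\Sigma_{\mathrm{deg,1}}$, which is already excluded.
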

\begin{proof}
    The proof is by a straightforward application of the multijet transversality theorem. In the case of a stratified submanifold, we apply the theorem to each stratum. For example, consider the case of $\Sigma_{\mathrm{deg,par}}^{(2)}$. It is a subset of ${}_2J^2(U\times A,\,E)$ of codimension~$7$. The domain of the $2$-multi-$2$-jet extension of $F$ has dimension $6$, which is less than $\operatorname{codim}\Sigma_{\mathrm{deg,par}}^{(2)}$. The minimal smoothness required in the theorem is $3<r$. Hence, for generic $F$, the extension ${}_2j^2F$ must intersect $\Sigma_{\mathrm{deg,par}}^{(2)}$ transversely, i.e., there must be no intersections. The argument for the rest of the claims is analogous. Since a finite intersection of residual sets is residual, a generic $F$ has all required properties simultaneously.
\end{proof}

\subsection{The iterates inherit the properties of generic maps locally}\label{sec:fix_to_per}

Let $B$ be the space of parameters (the parameter base) and let $M$ be the phase space. In this section $M$ can be an arbitrary smooth manifold.
For $r\ge 1$, a $C^r$-family with base $B$ of maps from $M$ to itself is an element of 
$C^r(B\times M,M)$. Consider a family
\[
F \colon B \times M \to M, 
\qquad 
F(\beta,x)=F_\beta(x).
\]
For $j\in \mathbb{Z}$ we denote by $F^j$ the family of $j$-iterates:
\[
F^j(\beta,\cdot)=(F_\beta(\cdot))^{\circ j}.
\]
Consider a tuple of subsets $V_j,\, j=0,\dots, q$ of the phase space~$M$. To see which role these are going to play, one may imagine that for some $\beta_0\in B$ the map $F_{\beta_0}$ has a periodic point 
$x_0$ of (minimal) period~$q$, and $V_j$ are the closed balls\footnote{A closed ball is a subset of~$M$ that becomes a closed ball in some chart.} around $F_{\beta_0}^j(x_0)$ such that $F_{\beta_0}$ takes each ball inside the next one. To prove Lemma~\ref{lem:trans}, we will need to be able to track several periodic points at once, so we allow each $V_j$ to be a finite union of closed balls, or rather simply a compact closure of an open set.

Assume that there exist a closed ball $U$ of $\beta_0$ and finite unions of closed balls 
$V_j,\, j=0,\dots,q$, such that

\begin{enumerate}[label=\arabic*)]
  \item $V_0,\dots,V_{q-1}$ are pairwise disjoint;
  \item for every $\beta\in U$ and every $j=0,\dots,q-1$ we have
  \[
  F_\beta(V_j)\subset\operatorname{int}V_{j+1},
  \]
  and $F_\beta$ is a $C^r$-diffeomorphism from a small neighbourhood of $V_j$ onto its image in $\operatorname{int}V_{j+1}$.
\end{enumerate}

\begin{prop}\label{prop:gen_iter}
For a family $F$ as above, there exists a neighbourhood 
${\mathcal O\ni F}$ in $C^r(B\times M,M)$ such that for any residual subset 
$\mathcal R\subset C^r(U\times V_0,\operatorname{int}V_q)$ the set
\[
\widetilde{\mathcal R}
=
\{\widehat F\in C^r(B\times M,M)\colon 
\widehat F^q|_{U\times V_0}\in\mathcal R\}
\]
is residual in $\mathcal O$.
\end{prop}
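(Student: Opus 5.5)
Since $\mathcal R$ is residual, I would write it as $\mathcal R\supset\bigcap_i \mathcal W_i$ with $\mathcal W_i$ open and dense in $\mathfrak D:=C^r(U\times V_0,\operatorname{int}V_q)$. Consider the map
\[
\Phi\colon \mathcal O\to \mathfrak D,\qquad \Phi(\widehat F)=\widehat F^q|_{U\times V_0}.
\]
Then $\widetilde{\mathcal R}\supset\bigcap_i\Phi^{-1}(\mathcal W_i)$, so it suffices to show that each $\Phi^{-1}(\mathcal W_i)$ is open and dense in $\mathcal O$.

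First I choose $\mathcal O$. Conditions 1)--2) involve finitely many compact sets and strict inclusions $F_\beta(V_j)\subset\operatorname{int}V_{j+1}$. They also involve $F_\beta$ being a diffeomorphism near $V_j$. All of these are $C^1$-open conditions. So there is a $C^r$-neighbourhood $\mathcal O$ of $F$ in which every $\widehat F$ satisfies 1)--2) with the same $U$ and $V_j$. For $\widehat F\in\mathcal O$ the iterate $\widehat F^q$ is well defined on $U\times V_0$ and takes values in $\operatorname{int}V_q$, so $\Phi$ makes sense. Composition $(G_1,\dots,G_q)\mapsto G_q\circ\dots\circ G_1$ is continuous in the $C^r$ topology on compact domains, with the usual care for $r=\infty$ via each $C^n$ seminorm. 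Hence $\Phi$ is continuous, and $\Phi^{-1}(\mathcal W_i)$ is open.

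The main step is density, which I would get by showing that $\Phi$ is locally surjective in a controlled way. Take $\widehat F\in\mathcal O$ and a small perturbation $G$ of $\Phi(\widehat F)$ in $\mathfrak D$. I want a small perturbation $\widetilde F$ of $\widehat F$ with $\Phi(\widetilde F)=G$. To get it, I modify only the last step. Set $P=\widehat F^{q-1}$, which maps $U\times V_0\to V_{q-1}$ and is a $C^r$-diffeomorphism onto its image for each $\beta$ by 2). Define $\widetilde F_\beta$ on $P_\beta(V_0)\subset V_{q-1}$ by
\[
\widetilde F_\beta = G_\beta\circ P_\beta^{-1}.
\]
This is $C^r$-close to $\widehat F_\beta = \widehat F^q_\beta\circ P_\beta^{-1}$ there. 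Extend the difference by a bump function supported in a small neighbourhood of $V_{q-1}$ in the product $B\times M$; in a chart it is a difference, and in general one uses the exponential map.

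Because $V_{q-1}$ is disjoint from $V_0,\dots,V_{q-2}$, the first $q-1$ iterates starting in $V_0$ are unchanged, provided the support of the cutoff avoids those sets. This is possible since they are compact and disjoint. Therefore $\widetilde F^q|_{U\times V_0}=G$ exactly, and $\|\widetilde F-\widehat F\|_{C^r}\le C\|G-\Phi(\widehat F)\|_{C^r}$, with $C$ depending on $P^{-1}$ and the cutoff.

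Now take a small neighbourhood of $\widehat F$. Density of $\mathcal W_i$ gives $G\in\mathcal W_i$ close to $\Phi(\widehat F)$, and the construction yields $\widetilde F\in\Phi^{-1}(\mathcal W_i)$ close to $\widehat F$. This proves density, and a countable intersection of open dense subsets of $\mathcal O$ is residual.

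I expect the main obstacle to be this lifting step. One point is the $C^r$ estimate for $G\circ P^{-1}$ uniformly in $\beta$, handled by continuity of composition and inversion on compacts. The other is making sure the cutoff support misses $V_0,\dots,V_{q-2}$ while still containing $P(U\times V_0)$; here the case $q=1$ is trivial. If $V_q$ overlaps $V_0$, this does not matter, because only the forward orbit from $V_0$ up to time $q-1$ must stay untouched.
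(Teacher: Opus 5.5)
Your proposal is correct and follows the paper's proof essentially step for step. In both, $\Phi\colon\widehat F\mapsto \widehat F^q|_{U\times V_0}$ is continuous as restriction followed by composition. It is locally surjective because $\Psi=\widehat F^{q-1}|_{U\times V_0}$ is a diffeomorphism onto its image, so any nearby $G$ is realized by redefining $\widehat F$ only near $U\times V_{q-1}$ as $G\circ\Psi^{-1}$. Hence preimages of open dense sets are open and dense in $\mathcal O$.

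You give more detail than the paper: the disjointness argument showing the first $q-1$ iterates are untouched, and the $C^r$ estimate via $\Psi^{-1}$. The one step both of you leave implicit is extending $(G-\Phi(\widehat F))\circ\Psi^{-1}$ off the compact set $\Psi(U\times V_0)$ with $C^r$-norm control before applying the cutoff. This is standard (a Whitney/Seeley-type extension).
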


\begin{proof}
Let
\[
\mathcal X_j=C^r(U\times V_j,V_{j+1}), 
\qquad 
\mathcal Y=C^r(U\times V_0,V_q).
\]
The restriction map
\[
R:\widehat F\longmapsto 
(\widehat F|_{U\times V_0},\dots,\widehat F|_{U\times V_{q-1}})
\]
is continuous from $C^r(B\times M,M)$ to $\prod_{j=0}^{q-1}\mathcal X_j$.
Since all domains are compact, the maps in $\mathcal X_j$ are proper, and hence composition defines a continuous map
\[
C:\prod_{j=0}^{q-1}\mathcal X_j\to\mathcal Y,
\qquad
C(\phi_0,\dots,\phi_{q-1})
=\phi_{q-1}\circ\cdots\circ\phi_0,
\]
and the $q$-iterate restriction satisfies
\[
\Phi(\widehat F):=\widehat F^q|_{U\times V_0}
= C(R(\widehat F)).
\]

Fix a family $\widehat{F}$ near~$F$. Since each $\widehat{F}_\beta|_{V_j}:V_j\to V_{j+1}$ is a local 
diffeomorphism and the sets $V_0,\dots,V_{q-1}$ are disjoint, the partial composition 
\[
\Psi:=\widehat{F}^{q-1}|_{U\times V_0}
\]
is a $C^r$-diffeomorphism from $U\times V_0$ onto its image in $U\times V_{q-1}$. Hence precomposition with $\Psi$ is a homeomorphism
\[
C^r(U\times V_{q-1},\operatorname{int}V_q)
\rightarrow
C^r(U\times V_0, \operatorname{int}V_q).
\]
Therefore small perturbations of $\Phi(\widehat{F})$ can be realized by perturbing only the last factor 
$\widehat F|_{U\times V_{q-1}}$, i.e., by perturbing $\widehat F$ 
inside $U\times V_{q-1}$. 
This shows that $\Phi$ is locally surjective at $\widehat{F}$: 
there exists a neighbourhood $\mathcal O$ of $F$ such that 
$\Phi(\mathcal O)$ contains a neighbourhood of $\Phi(F)$ in $\mathcal Y$, and the same applies to any $\widehat{F} \in \mathcal{O}$ provided that $\mathcal{O}$ is small.

Let $\mathcal R=\bigcap_{n\ge1}U_n$ be residual in $\mathcal Y$, 
with each $U_n$ open and dense. 
Then $\Phi^{-1}(U_n)\cap\mathcal O$ is open in $\mathcal O$ 
and dense by the local surjectivity of~$\Phi$. 
Hence
\[
\widetilde{\mathcal R}\cap\mathcal O
=
\bigcap_{n\ge1}
\bigl(\Phi^{-1}(U_n)\cap\mathcal O\bigr)
\]
is residual in $\mathcal O$.
\end{proof}

\subsection{The proof of Lemma~\ref{lem:trans}}

Now we are ready to prove Lemma~\ref{lem:trans}.
It suffices to prove the lemma for $C^r$-families with $r \ge 4$, since any less regular smooth family can be approximated by a $C^4$-family. Assume that $r \ge 4$ and  let~$F$ be an arbitrary $C^r$-smooth two-parameter family. Consider the set of $q$-periodic points of $F$:
\[
  P = \{(s, \theta, x)\in I_s\times I_\theta \times S^1
  \colon F^q_{(s,\theta)}(x) = x\},
\]
where $q$ is the denominator of the rotation number $q_n$ of interest.
The argument splits into two parts that have similar structure.
First we prove that for a generic $F$ for any $\beta$ the $q$-periodic points of the map $F_{\beta}$ are isolated, and hence there are finitely many of them. Each point $(\beta, x) \in P$ is a center of a small closed rectangle $U(\beta)\times I(x)$ that has disjoint images under the first $q-1$ iterates of the map
\[\operatorname{Id}\times F\colon (\beta, x) \mapsto (\beta, F_\beta(x)).\]
We cover $P$ by the corresponding open rectangles and take a finite subcover $(R_i)_{i=1}^N$. Treating each $R_i$ as $U \times V_0$ from Proposition~\ref{prop:gen_iter}, we recover the appropriate $V_j, j=1, \dots q$; we denote these by $V_{i,j}$ since they differ for different~$i$. The role of the set $\mathcal{R}$ is played by the set $\mathcal{R}_i$ of $G \in C^r(R_i, \operatorname{int}V_{i,q})$ with jet-extensions that do not intersect $\Sigma_{\mathrm{deg, 1}}$. Proposition~\ref{prop:gen_iter} yields that there is a locally generic set $\tilde{\mathcal{R}}_i$ in a neighborhood of $F$ such that for $\widehat{F}$ in this set $\widehat{F}^q|_{R_i}$ has no fixed points where $\partial_{xxx}\widehat{F}^q_\beta$ vanishes. Observe that for a $C^3$-smooth circle diffeomorphism $\partial_{xxx}$ must vanish at a non-isolated fixed point, by iterative application of Rolle's theorem. Hence, there are no non-isolated $q$-periodic points of $\widehat{F}$ in~$R_i$. Hence, any $\widehat{F} \in \cap_i \tilde{\mathcal{R}}_i$ does not have non-isolated $q$-periodic points in $A := \cup_i R_i$, but for $\widehat{F}$ close to $F$ there are no $q$-periodic points outside of~$A$. The global claim is obtained by applying this argument to $F$ from a countable dense set of families.

Now we will essentially repeat the argument above, but for $F$ with isolated $q$-periodic points and with $(R_i)_i$ replaced by a more refined finite cover for~$P$ that will be more convenient when considering the multijets. We want the set $P$ to be covered by the interiors of the sets $P_i,\, i=1, \dots, N$ with the following properties.

\begin{enumerate}
  \item Each $P_i$ is a product of a closed ball $B_i$ in the parameter space and a finite union $V_i \subset S^1$ of segments in the phase space.
  \item For every $i$, $V_i$ splits into a union $\sqcup_{j=0}^{q-1} V_{i,j}$ such that each $V_{i,j}$ is a finite disjoint union of segments and for every $\beta \in B_i$ we have
  \[F_\beta(V_{i,j}) \subset \operatorname{int} V_{i,j+1},\; j=0, \dots, q-2.\]
  \item There exists an $\varepsilon>0$ such that for any $\varepsilon$-perturbation of $F$ the set of $q$-periodic points is contained in the interior\footnote{The interior is taken w.r.t. the topology of $I_s\times I_\theta \times S^1$. Note that since the complement of this interior is compact sufficiently small perturbations cannot create $q$-periodic points outside this interior.} of $\bigcup_{i=1}^N P_i$. 
\end{enumerate}
To obtain such a cover, we consider an arbitrary $(\beta, x) \in P$ and cover the isolated $q$-periodic points of $F_\beta$ by small segments in $S^1$; denote the union of these segments by~$V_\beta$.
We multiply~$V_\beta$ by a small closed ball $U(\beta)$ around~$\beta$. Note that if $U(\beta)$ is sufficiently small then there are no points of $P \cap (U(\beta)\times S^1)$ not covered by $U(\beta) \times  V_\beta$. Moreover, if the segments in $V(\beta)$ are chosen properly, $V_\beta$ splits into a union of $V_{\beta, j}, \, j = 0,\dots, q-1,$ as in Proposition~\ref{prop:gen_iter}.
We cover $P$ by the interiors of $U(\beta)\times V_\beta$ and take a finite subcover. That is, $P_i$ are $U(\beta_i)\times V_{\beta_i}$ for some~$\beta_i$.

We take an $\varepsilon/2$-approximation of $F$ such that the restrictions of its $q$-th iterate to each $V_{i,0}$ have the transversality properties of Proposition~\ref{prop:multijet-transversality} inherited by Proposition~\ref{prop:gen_iter}.
More formally, we apply Proposition~\ref{prop:multijet-transversality} with $U \times A = B_i \times V_{i,0}$ and appropriately chosen $E$ to obtain a residual set $\mathcal{R}_i$ of maps that have multijet extensions transverse to all relevant submanifolds in the multijet space, and then apply to this set Proposition~\ref{prop:gen_iter}.
This yields a locally residual set $\tilde{\mathcal R}_i$, in a neighborhood of~$F$, of maps that have $q$-iterates whose restrictions to $B_i \times V_{i,0}$ are in~$\mathcal{R}_i$.

Consider another open and dense set $\tilde{\mathcal R}_0$ of two-parameter families that have structurally stable circle diffeomorphisms at the corners of $I_s \times I_\theta$.
Choose $\widehat{F} \in \tilde{\mathcal R} = \cap_{i=0}^N\tilde{\mathcal R}_i$ such that $\operatorname{dist}(\widehat{F}, F) <\varepsilon/2$.
For this family~$\widehat{F}$, all $q$-periodic points are contained in $\bigcup_{i=1}^N P_i$ and the restrictions $\widehat{F}^q|_{P_i}$ have jet-extensions transverse to submanifolds of Proposition~\ref{prop:multijet-transversality}. Recall that $\Sigma_{\mathrm{par}}$ has codimension~2. Transversality to $\Sigma_{\mathrm{par}}$ implies that the set $W \subset P$ of non-hyperbolic $q$-periodic points, being the $j^1(\widehat{F}^q)$-preimage of $\Sigma_{\mathrm{par}}$, is a 1-dimensional submanifold in $I_s\times I_\theta \times S^1$ with boundary contained in~$\partial(I_s\times I_\theta \times S^1)$, excluding $C\times S^1$, where $C$ is the set of corners. The projection of $W$ to $I_s\times I_\theta$ may have critical values and self-intersections, i.e., points with multiple preimages. The points with more than 2 preimages are ruled out by the transversality of ${}_3j^1(\widehat{F}^q)$ to $\Sigma^{(3)}_{\mathrm{par}}$; coexistence of a degenerate parabolic point and any other parabolic point is ruled out by the transversality of ${}_2j^2(\widehat{F}^q)$ to $\Sigma^{(2)}_{\mathrm{deg,par}}$. Hence, self-intersections correspond to the coexistence of two quadratic parabolic orbits. These self-intersections are transverse since ${}_2j^1(\widehat{F}^q)$ does not intersect $\Sigma^{(2)}_{\mathrm{par,tan}}$. Therefore, the intersections happen at isolated points. The critical values for the projection are exactly the cusp bifurcation values; other possibilities are ruled out by the image of $j^3(\widehat{F}^q)$ having no intersections with $\Sigma_{\mathrm{deg,1}}$ and $\Sigma_{\mathrm{deg,2}}$. The projection $\pi_B(W)$ of $W$ to $B$ has finitely many points with horizontal tangents since $j^1(\widehat{F}^q)$ is transverse to $\Sigma_{\mathrm{par,hor}}$.

Consider one of the edges $J_1$ of the boundary $\partial B$, say, $J_1 = \{0\}\times I_\theta$. After intersecting $\tilde{\mathcal{R}}$ with another residual subset of families, we may assume that the restriction of $\widehat{F}$ to $J_1$ has multijet extensions transverse to the submanifolds $\Sigma_{\mathrm{deg}}, \Sigma^{(2)}_{\mathrm{par}}, \Sigma_{\mathrm{par, hor}}$. This shows that for $\widehat{F}$ there are no cusp bifurcations, coexisting parabolic points, or tangencies with $\pi_B(W)$ at any point of~$J_1$. For a vertical segment $J_2$ in $\partial B$, the argument is analogous, but one has to consider the submanifolds $\Sigma_{\mathrm{par,vert}}$ defined analogously to $\Sigma_{\mathrm{par, hor}}$, but with respect to points where $\pi_B(W)$ has vertical tangent lines.

We have proven the claims of Lemma~\ref{lem:trans} for $\pi_B(W)$ rather than the subset $S_{\mathrm{par, q_n}}$ of $B$ that corresponds to diffeomorphisms with rotation number $q_n$ and a parabolic periodic orbit. But since the rotation number takes values in a discrete subset of $\mathbb{Q}$ for parameters in $\pi_B(W)$ and depends continuously on the map, it is constant on each connected component of~$\pi_B(W)$. But $S_{\mathrm{par, q_n}}$ is the union of connected components of $\pi_B(W)$, where the rotation number equals $q_n$, hence $S_{\mathrm{par, q_n}}$ has the same properties. The proof of Lemma~\ref{lem:trans} is complete.

\begin{rem}\label{rem:isolated}
Now it is easy to see that a generic $C^r$-smooth $l$-parameter family of circle diffeomorphisms has isolated periodic points for each parameter value~$\beta$ provided that $r > l+1$. Indeed, consider $q$-periodic points. In the space of $(l+1)$-jets, we can consider the analogues $\Sigma_{l+1}$ of $\Sigma_{\mathrm{deg,1}}$ defined as
\[
   \Sigma_{l+1} = \{\, j^{l+1}_{(\beta, x)}F \colon F_\beta(x)=x, \; \partial_xF_\beta(x)=1, \; (\partial_x)^jF_\beta(x)=0, \, j=2,\dots,l+1\}.
\]
Arguing as in the first part of the proof of Lemma~\ref{lem:trans}, we obtain that the $q$-th iterate of a generic $l$-parameter family $F$ has an $(l+1)$-jet extension with domain of dimension $l+1$ and with image that does not intersect~$\Sigma_{l+1}$ that has codimension $l+2$. This implies that $q$-periodic points are isolated. It suffices to intersect the corresponding residual subsets of families obtained for different~$q$.
\end{rem}

\begin{rem}\label{rem:nondenfam}
The argument in the proof of Lemma~\ref{lem:trans} can be applied to one-parameter families with minor modifications to prove that \emph{non-degenerate} families form an open and dense set in the space of $C^r$ one-parameter families. In this case, we only consider $\Sigma_{\mathrm{par}}, \Sigma_{\mathrm{deg}}$ and $ \Sigma_{\mathrm{par, hor}}$. Since the source manifold $I_\theta \times S^1$ is compact, by Corollary~5.7 of~\cite{F} the sets $\mathcal{R}_i$ are not just residual but open and dense, and the sets $\tilde{\mathcal R}_i$ inherit this property. We intersect finitely many of these and obtain a residual set $\tilde{\mathcal R}$ of one-parameter families that exhibit only quadratic parabolic bifurcations at isolated values of the parameter. Note that we used the transversality to $\Sigma_{\mathrm{par, hor}}$ to exclude the case when the parabolic point is present, but the bifurcation does not unfold in the family. Finally, if a family has a diffeomorphism with more than one parabolic orbit, a small perturbation yields a family where this is not the case. By transversality to $\Sigma_{\mathrm{par}}$, the preimage $W$, which is a finite set now, continuously depends on the family, so a small perturbation of the new family will not have coexisting parabolic points. 
\end{rem}

\end{document}